\documentclass{article}
\usepackage{amsthm}

\newtheorem{theorem}{Theorem}
\newtheorem{lemma}[theorem]{Lemma}
\newtheorem{corollary}[theorem]{Corollary}
 
\theoremstyle{definition}
\newtheorem*{definition}{Definition}
\newtheorem*{remark}{Remark}

\usepackage{tikz}
\usetikzlibrary{math}
\usepackage{subcaption}
\usepackage{mathtools}

\begin{document}

\title{Block Stacking, Airplane Refueling, and\\Robust Appointment Scheduling}
\markright{}
\author{Simon Gmeiner and Andreas S.\ Schulz}
\date{}

\maketitle

\begin{sloppypar}
\begin{abstract}
How can a stack of identical blocks be arranged to extend beyond the edge of a table as far as possible? We consider a generalization of this classic puzzle to blocks that differ in width and mass. Despite the seemingly simple premise, we demonstrate that it is unlikely that one can efficiently determine a stack configuration of maximum overhang. Formally, we prove that the Block-Stacking Problem is NP-hard, partially answering an open question from the literature. Furthermore, we demonstrate that the restriction to stacks without counterweights has a surprising connection to the Airplane Refueling Problem, another famous puzzle, and to Robust Appointment Scheduling, a problem of practical relevance. In addition to revealing a remarkable relation to the real-world challenge of devising schedules under uncertainty, their equivalence unveils a polynomial-time approximation scheme, that is, a $(1+\epsilon)$-approximation algorithm, for Block Stacking without counterbalancing and a $(2+\epsilon)$-approximation algorithm for the general case.
\end{abstract}
\end{sloppypar}

\section{Introduction.} \label{section:introduction}
What is the largest possible overhang that can be achieved when stacking a set of blocks? This is the core question of the Block-Stacking Problem, a puzzle that has been around in various forms for decades (see, e.g., \cite{paterson2009maximum} for references dating back to the middle of the 19th century). 
Typically, the problem is presented assuming all blocks are identical and can only be stacked one on top of the other.
Remarkably, it is theoretically possible to construct balanced stacks that achieve an arbitrarily large overhang, even in this highly constrained form. 
As a result, the problem has attracted a great deal of attention. It has become a widespread puzzle and experiment that appears in many textbooks and countless videos on the Internet, albeit inconsistently under different names such as ``Leaning Tower of Lire'' (e.g.,~\cite{johnson1955leaning}), ``Block-'' or ``Book-Stacking Problem,'' ``Building an Infinite Bridge,'' and the like.
Besides blocks and books, other objects such as bricks, coins, or playing cards can and have been used to build such stacks. The underlying mechanics, however, always remains the same.

\begin{figure}[ht] 
\centering
\begin{subfigure}[b]{.48 \textwidth}
\centering
\includegraphics{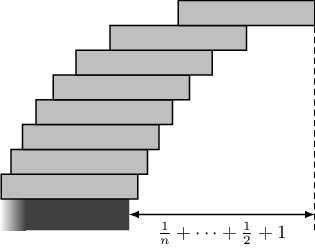}
\end{subfigure}
\begin{subfigure}[b]{.48 \textwidth}
\centering
\includegraphics{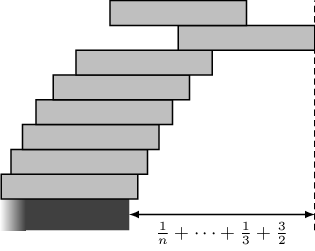}
\end{subfigure}
\caption{The harmonic stack and the alternative balanced stack configuration with maximum overhang for identical blocks of width two.}
\label{figure:identical_blocks}
\end{figure}

It has long been known (see, e.g.,~\cite{hall2005fun,johnson1955leaning,paterson2009maximum}) that for $n$ identical blocks of width two, one can obtain a balanced stack with overhang $\sum_{k=1}^{n} 1/k$ by iteratively placing the center of gravity of the top $i$ blocks right at the edge of the block below, for all~$i=1, \ldots, n$. 
Since the harmonic series, $\sum_{k=1}^{\infty} 1/k$, diverges, the overhang of this so-called harmonic stack, shown in Figure~\ref{figure:identical_blocks} on the left, can become arbitrarily large for sufficiently large~$n$. 
Although it had long been surmised that this construction method yields the maximum possible overhang, it was only recently that Treeby~\cite{treeby2018further} provided rigorous and conclusive proof. 
However, as already pointed out by Hall~\cite{hall2005fun}, this solution is, perhaps surprisingly, not unique because the same overhang can be achieved by letting the second-highest block protrude while using the uppermost block as a counterweight (cf.\ Figure~\ref{figure:identical_blocks}).
Hall~\cite{hall2005fun} further pursued the concept of counterbalancing and discovered that allowing multiple blocks per layer can achieve an even larger overhang with the same number of blocks, a variant Paterson et al.~\cite{paterson2009maximum,paterson2009overhang} later studied in detail.
Other modifications, such as blocks with an uneven weight distribution~\cite{horton1997leaning,kazachkov2017stack}, sticky blocks~\cite{lengvarszky2023maximum}, or a continuous version~\cite{polster2012case}, have also been explored. \\

Treeby~\cite{treeby2018further} was the first to generalize the Block-Stacking Problem beyond a set of identical blocks. He continued to require that all blocks have the same height but allowed them to differ in width. 
More specifically, he assumed that the mass of a block is proportional to its width.
Since the blocks are no longer interchangeable in this setting, the order in which they are stacked affects the achievable overhang. 
When no counterbalancing is allowed, the optimal stacking order is obtained by simply sorting the blocks by width, with the widest block on top (see Figure~\ref{figure:treeby_blocks}).
However, unlike for identical blocks, counterweights can significantly increase the overhang.
This makes finding the maximum possible overhang much more difficult, as significantly more complex stack configurations must be considered.
Treeby even conjectured it to be NP-hard to find an optimal solution. For general information on NP-hardness and computational complexity, see Section~\ref{section:np_hardness}. 

\begin{figure}[ht] 
\centering
\begin{subfigure}[b]{.48 \textwidth}
\centering
\includegraphics{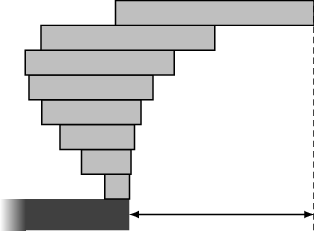}
\end{subfigure}
\begin{subfigure}[b]{.48 \textwidth}
\centering
\includegraphics{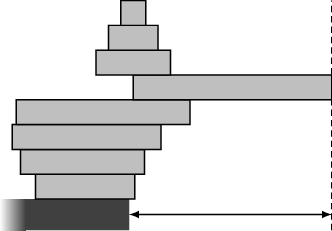}
\end{subfigure}
\caption{Without counterbalancing, blocks of the same height, ordered from top to bottom by non-increasing width, achieve the maximum possible overhang. Using some of the blocks as counterweights allows for an even larger overhang.}
\label{figure:treeby_blocks}
\end{figure}

We go one step further than Treeby by considering differences in height, material, or three-dimensional depth between the blocks.
Since only the center of gravity with respect to the cross-section determines whether a stack is balanced, none of these factors directly affects the overhang of a stack. Instead, their effects can be aggregated into the individual mass of a block, which in turn has a tremendous impact on finding the ideal stack configuration.

As seen in Figure~\ref{figure:treeby_blocks}, we usually want the widest block to protrude. However, if that block is disproportionately heavy, placing it further down the stack might be better, as shown in Figure~\ref{figure:general_blocks}. 
A short and heavy block, by contrast, should ideally either be located toward the bottom of the stack or serve as a counterweight to extend the overhang of the protruding block.
The Block-Stacking Problem aims to identify a stack configuration that yields the largest possible overhang. 

\begin{figure}[ht] 
\centering
\begin{subfigure}[b]{.48 \textwidth}
\centering
\includegraphics{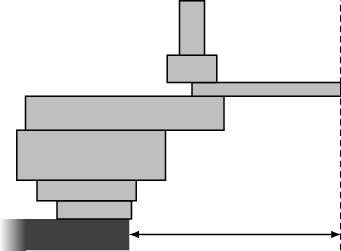}
\end{subfigure}
\begin{subfigure}[b]{.48 \textwidth}
\centering
\includegraphics{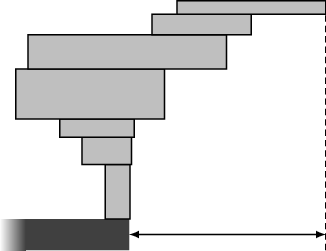}
\end{subfigure}
\caption{Optimal stack configurations with and without counterbalancing. In this and all subsequent figures, we assume that the mass of a block is proportional to the product of its width and height.}
\label{figure:general_blocks}
\end{figure}

In Section~\ref{section:np_hardness}, we will prove that due to the additional complexity introduced by arbitrary widths and masses, the Block-Stacking Problem is, in fact, NP-hard. 
That is, unless P = NP, a solution cannot be found by an efficient (i.e., polynomial-time) algorithm, let alone by a simple rule. To some extent, this is a response to Treeby's conjecture formulated for blocks of the same height. 
Additionally, this highlights that a seemingly simple problem, such as stacking blocks, can be surprisingly challenging.
Nevertheless, in Section~\ref{section:approximation}, we provide a $(2+\epsilon)$-approximation algorithm. An $\alpha$-approximation algorithm denotes an efficient algorithm that returns a solution whose objective value (in our case, the overhang) deviates from the optimal one by at most a factor~$\alpha$.

Unlike for blocks of the same height, even when no counterbalancing is allowed, no simple method is known for determining the maximum possible overhang.
Nonetheless, there is a close connection between the Block-Stacking Problem, the Airplane Refueling Problem, and the Robust Appointment Scheduling Problem, three optimization problems with entirely different backgrounds.

The Airplane Refueling Problem is a mathematical puzzle that asks to maximize the range of a fleet of aircraft that can share their fuel in flight. A detailed description of the problem can be found in Section~\ref{section:ar}. 
Due to its connection to single-machine scheduling problems with a concave cost function (see~\cite{vasquez2015airplane} for more details), it has lately attracted some attention after being mentioned in~\cite{woeginger2010airplane} as a problem of unknown computational complexity. 
Whether the Airplane Refueling Problem can be solved efficiently remains an open question. However, Gamzu et al.~\cite{gamzu2019polynomial} recently developed a polynomial-time approximation scheme~(PTAS), providing a $(1+\epsilon)$-approximation algorithm for any fixed~$\epsilon > 0$.
In addition, dominance and precedence conditions have been investigated~\cite{vasquez2015airplane}, based on which a branch-and-bound algorithm has been devised~\cite{li2019fast}.
Furthermore, a mixed-integer linear programming model with $O(n^2)$ constraints has been proposed~\cite{zhang2021novel}.
As the latter two methods outperform naive approaches in terms of running time, they help solve practical cases. However, they are not considered efficient in the sense of complexity theory (cf.~Section~\ref{section:np_hardness}). Despite its name, the Airplane Refueling Problem has thus far been primarily an academic puzzle with no immediate real-world applications.

In contrast, Appointment Scheduling is an optimization problem with practical background, where the goal is to devise an ideal schedule for tasks whose time requirements can only be estimated (see Section~\ref{section:ras} for details). 
As an alternative to several stochastic models, Mittal et al.~\cite{mittal2014robust} introduced the problem of Robust Appointment Scheduling. 
As pointed out in~\cite{mittal2014robust,schulz2019robust}, similar to the Airplane Refueling Problem, Robust Appointment Scheduling is related to single-machine scheduling problems with a concave cost function. This allowed some immediate results regarding approximation algorithms.
Good approximation guarantees can already be obtained by simple ratio-based algorithms~\cite{hohn2015performance,schulz2019robust}, and, as for the Airplane Refueling Problem, there exists a PTAS~\cite{megow2018dual,stiller2010increasing}. 
However, for Robust Appointment Scheduling, it also remains an open question whether an optimal solution can be found efficiently.

Despite their different origins and even though little is known about whether it is possible to solve them efficiently, in Sections~\ref{section:ar} and~\ref{section:ras}, we prove that Block Stacking without counterbalancing, Airplane Refueling, and Robust Appointment Scheduling are in some sense equivalent and thus form a group of problems with the same computational complexity. 
As a result, the existing algorithms for the Airplane Refueling Problem can also be used to solve Robust Appointment Scheduling and Block Stacking without counterbalancing.
Moreover, the PTAS for the Airplane Refueling Problem can be directly transferred to the Block-Stacking Problem. Most importantly, an efficient algorithm for any of these three problems would imply such an algorithm for the other two problems.

\section{The Block-Stacking Problem.} \label{section:bsp}

We consider a set $B = \{1,...,n\}$  of blocks, each characterized by its half-width~$w_i \geq 0$ and mass~$m_i > 0$. The goal is to arrange these blocks into a balanced stack that extends as far as possible beyond the edge of a table. For now, let the stacking order be fixed and the blocks be lined up sequentially from top to bottom.
For each block~$i$, let $x_i$ denote the horizontal position of its midpoint, relative to the edge of the table. 
Moreover, define
\[ M_i = \sum_{j :\, j \leq i} m_j \] 
to be the mass of the top~$i$ blocks. 
In accordance with the laws of physics, a stack is \textit{balanced} if and only if for every $i = 1,\dots,n-1$, the center of gravity of the blocks~$1$~through~$i$, $\bar{x}_i =  ( \sum_{j :\, j\leq i} m_j x_j ) / M_i$, is located within the edges of \mbox{block $i+1$}, i.e.,
$x_{i+1} - w_{i+1} \leq \bar{x}_i \leq x_{i+1} + w_{i+1}$.
Furthermore, without loss of generality, we assume that the overall center of gravity is positioned precisely at the edge of the table. By convention, the overhang of a stack is always built to the right.

As seen for identical blocks (cf.\ Figure~\ref{figure:identical_blocks}), a natural strategy is to iteratively place the top $i$ blocks as far to the right as possible without causing the stack to topple. 
As suggested by Treeby~\cite{treeby2018further}, we say that a stack is \textit{right-aligned} at block $i$ if the center of gravity of blocks $1$ through $i$ is lined up with the right edge of block \mbox{$i+1$}, i.e., if \mbox{$\bar{x}_i = x_{i+1} + w_{i+1}$} (see Figure~\ref{figure:notation}).
However, as mentioned earlier, the optimal stack configuration for non-identical blocks may not be right-aligned at every block. 
Hence, the maximum overhang is not necessarily attained by the uppermost block but is instead given by $\max_{i} (x_i + w_i)$. 
We refer to the block that reaches this overhang with its right edge as the \textit{protruding} block.

\begin{figure}[ht] 
\centering
\begin{subfigure}[b]{.48 \textwidth}
\centering
\includegraphics{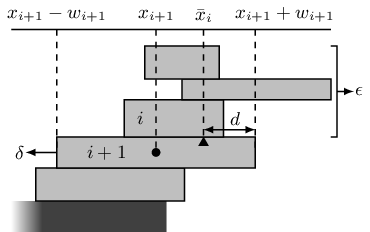}
\end{subfigure}\
\begin{subfigure}[b]{.48 \textwidth}
\centering
\includegraphics{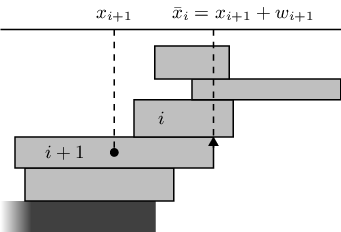}
\end{subfigure}
\caption{Both stacks are balanced, but only the one on the right is right-aligned at block $i$. The indicated adjustments for the left stack as per Theorem~\ref{result:theorem:stack_configuration} lead to a larger overhang. }
\label{figure:notation}
\end{figure}

We are interested in finding a stack with the largest possible overhang. By generalizing Proposition 5.1 of Treeby~\cite{treeby2018further}, one can explicitly describe the structure that any such stack must have. This structure can already be observed in Figures~\ref{figure:identical_blocks},~\ref{figure:treeby_blocks}, and~\ref{figure:general_blocks}. 

\begin{theorem} \label{result:theorem:stack_configuration}
A stack of maximum overhang with protruding block $p \in \{1,\dots,n\}$ satisfies the following two conditions:
\begin{itemize}
\item[a)] The stack is right-aligned at all blocks $i \geq p$.
\item[b)] The combined center of gravity of blocks $1$ through $p-1$ is aligned with the left edge of the protruding block $p$.
\end{itemize}
\end{theorem}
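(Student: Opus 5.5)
The plan is an exchange argument: starting from a balanced stack of maximum overhang whose protruding block is $p$, we show that violating a) or b) would allow a strictly larger overhang. Normalize so the overall center of gravity sits at the table edge, $\bar{x}_n = 0$, and write the overhang as $x_p + w_p$. The key step is to express $x_p$ through the telescoping identity
\[ x_p = \bar{x}_n + \sum_{i=p}^{n-1} (\bar{x}_i - \bar{x}_{i+1}) + (x_p - \bar{x}_p). \]
The increments can be rewritten using $\bar{x}_{i+1} = (M_i \bar{x}_i + m_{i+1} x_{i+1})/M_{i+1}$, which gives
\[ \bar{x}_i - \bar{x}_{i+1} = \frac{m_{i+1}}{M_{i+1}}\,(\bar{x}_i - x_{i+1}) \leq \frac{m_{i+1}}{M_{i+1}}\, w_{i+1}. \]
Equality holds exactly when the stack is right-aligned at block $i$. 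Similarly,
\[ x_p - \bar{x}_p = \frac{M_{p-1}}{M_p}\,(x_p - \bar{x}_{p-1}) \leq \frac{M_{p-1}}{M_p}\, w_p, \]
using the balance condition $\bar{x}_{p-1} \geq x_p - w_p$. Equality holds exactly when the center of gravity of blocks $1,\dots,p-1$ lies on the left edge of block $p$. For $p=1$ this term is zero and b) is vacuous. Altogether, the overhang is bounded by
\[ w_p + \frac{M_{p-1}}{M_p} w_p + \sum_{i=p}^{n-1} \frac{m_{i+1}}{M_{i+1}} w_{i+1}, \]
a quantity depending only on the order and on $p$.

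Next we show the bound is attainable while keeping the stack balanced. We construct the configuration bottom-up, keeping the positions of the blocks above $p$ relative to each other. Translate the group of blocks $1,\dots,p-1$ rigidly so that its center of gravity is at $x_p - w_p$. Then place block $p$ together with that group so that $\bar{x}_p$ is at $x_{p+1} + w_{p+1}$, and continue downward with right-alignment at every $i \geq p$. Finally, translate the whole stack so that $\bar{x}_n = 0$. Rigid translations of a top group leave every balance condition inside the group intact. The only conditions that change are at the interfaces we set, and those are satisfied with equality, so the new stack is balanced. Its value of $x_p + w_p$ equals the upper bound.

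If the original optimal stack violated a) at some $i \geq p$, or b), the corresponding inequality would be strict. It would then lie strictly below the bound, contradicting optimality. The one delicate point is that $m_{i+1} > 0$ and $M_{p-1} > 0$ (for $p \geq 2$), so each slack really costs overhang; this holds because all masses are positive. A second subtlety is that after the modification $p$ might no longer be the protruding block. That is harmless, since the overhang is a maximum over all blocks and is therefore at least $x_p + w_p$, which has strictly increased.

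I expect the main obstacle to be the bookkeeping of the telescoping decomposition, in particular handling the top group $1,\dots,p-1$ as a rigid body in the balance argument. The remaining steps are routine.
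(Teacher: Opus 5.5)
Your proof is correct, but it takes a different route from the paper's.

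\textbf{The paper's route.} The paper argues by local perturbation. If there is slack $d>0$ at an interface (between $\bar{x}_i$ and the right edge of block $i+1$ for some $i \geq p$, or between $\bar{x}_{p-1}$ and the left edge of block $p$), it moves the upper group and the adjacent block in opposite directions by $\epsilon,\delta \in (0,d/2]$ with $\delta m_{i+1} = \epsilon M_i$. This keeps the joint center of gravity fixed, so every balance condition further down is untouched, while $x_p + w_p$ strictly increases.

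\textbf{Your route.} You decompose $x_p$ telescopically over the interfaces and bound each increment by the corresponding balance constraint, with equality exactly at alignment. You then rebuild the stack with rigid translations so that it attains the bound.

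\textbf{What each buys.} The paper's argument is shorter, needs no computation, and only requires choosing the shifts small enough not to overshoot. However, it yields only necessary conditions, and the closed form \eqref{equation:overhang_fixed_configuration} has to be obtained afterwards by a separate calculation.

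Your approach costs more bookkeeping, since you must check that the rebuilt stack is balanced. In return it proves more:
\begin{itemize}
\item Because $1 + M_{p-1}/M_p = 2 - m_p/M_p$, your bound is exactly the expression in \eqref{equation:overhang_fixed_configuration}.
\item You show that this expression bounds $x_q + w_q$ for every block $q$ of every balanced stack with the given order, and that the bound is attained.
\item This rigorously justifies \eqref{equation:overhang_fixed_configuration} and the formulation \eqref{equation:overhang_flexible_configuration}.
\item It also shows that a stack of maximum overhang exists for each fixed order, which the theorem's statement takes for granted.
\item As a by-product, the conditions a) and b) hold for every block that ties for the maximum overhang.
\end{itemize}
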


The proof closely follows the ideas presented in~\cite{treeby2018further}, requiring only a few adjustments to adapt it to blocks of arbitrary mass.
\begin{proof} 
\textit{a)} Suppose the stack is not right-aligned at block $i \geq p$, as illustrated in Figure~\ref{figure:notation} on the left. Then the center of gravity of blocks $1$ through $i$ is at a distance $d > 0$ to the left of the right edge of block $i+1$. Let $\delta, \epsilon \in (0, d/2]$ such that $\delta m_{i+1} = \epsilon M_i$. Shift block $i+1$ to the left by~$\delta$, and move blocks $1$ through $i$, including $p$, to the right by~$\epsilon$. The way $\delta$ and $\epsilon$ were defined, the center of gravity of blocks $1$ through $i+1$ is not affected by this change. Thus, the stack remains balanced but has a greater overhang, contradicting optimality of the original stack.

\textit{b)} Suppose the center of gravity of blocks $1$ through $p-1$ is at a distance $d > 0$ to the right of the left edge of $p$. Let $\delta, \epsilon \in (0, d/2]$ such that $\delta m_p = \epsilon M_{p-1}$. This time, shift blocks $1$ through $p-1$ to the left by $\epsilon$ and move $p$ to the right by $\delta$. Again, the stack remains balanced and has a larger overhang than it did initially.
\end{proof}

\begin{figure}[ht] 
\centering
\includegraphics{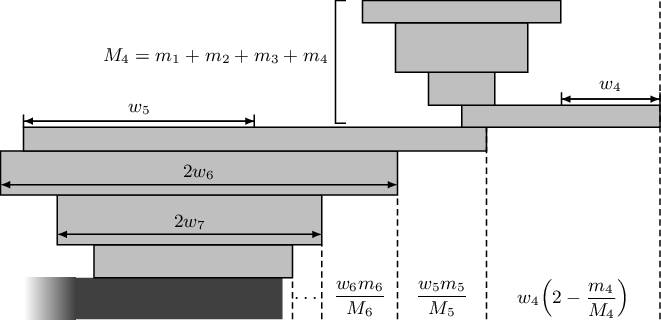}
\caption{The general structure of a stack with maximum overhang. The top three blocks serve as counterweights and have no direct contribution to the overhang. The right-aligned blocks incrementally form the overhang. Due to counterweights, the protruding block can contribute to the overhang with more than half its width.}
\label{figure:stack_configuration}
\end{figure}

By Theorem~\ref{result:theorem:stack_configuration}, and as depicted in Figure~\ref{figure:stack_configuration}, any optimal stack configuration divides the blocks into counterweights and right-aligned blocks, with the uppermost right-aligned block protruding. 
The counterweight blocks only contribute indirectly to the overhang by allowing the protruding block to extend further.
Each right-aligned block~$i$, on the other hand, directly generates an overhang equal to the distance between the combined center of gravity, $\bar{x}_{i}$, and the block's right edge, $x_i + w_i$. A straightforward calculation reveals that this distance is~$w_i m_i / M_i$, except for the protruding block $p$, which, due to the counterweights, contributes to the overhang with a distance of~$w_p ( 2 - m_p / M_p )$. Consequently, the maximum overhang of a stack with specified protruding block $p$ is
\begin{equation} \label{equation:overhang_fixed_configuration}
w_p  \bigg( 2 - \frac{m_p}{M_p} \bigg) + \sum_{i :\, i > p} \frac{w_i m_i}{M_i}, \text{\quad where \quad} M_i = \sum_{j :\, j \leq i} m_j. 
\end{equation}

Note that a higher counterweight increases the contribution of the protruding block to the overhang, whereas, for the other right-aligned blocks, a higher weight upon them is detrimental. This can also be observed in the following example.

\subsection{Stacking Configurations.}

Consider the blocks~$a$ and~$b$ with $w_a = 1$, $m_a = k$, \mbox{$w_b = k$}, and $m_b = 1$, for some $k \geq 2$. 
By~\eqref{equation:overhang_fixed_configuration}, with block~$a$ on top and without counterbalancing, the maximum overhang amounts to 
\[ w_a \bigg( 2 - \frac{m_a}{m_a} \bigg) + w_b \frac{m_b}{m_a + m_b} = 1 + \frac{k}{k+1} = 2 - \frac{1}{k+1} . \]
On the other hand, reversing the stacking order, that is, placing the longer and lighter block~$b$ on top, allows an overhang of 
\[ w_b \bigg( 2 - \frac{m_b}{m_b} \bigg) + w_a \frac{m_a}{m_b + m_a} = k + \frac{k}{1+k} . \]
Observe that the overhang cannot exceed two for the first stacking order, while the latter generates more than $k/2$ times as much. 
Thus, even considering only two blocks and no counterbalancing, rearranging the stacking order can improve the result by an arbitrarily large factor.

\begin{figure}[ht] 
\centering
\begin{subfigure}[b]{.24 \textwidth}
\centering
\includegraphics{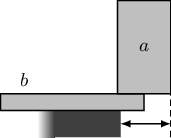}
\end{subfigure}
\hspace{1mm}%
\begin{subfigure}[b]{.24 \textwidth}
\centering
\includegraphics{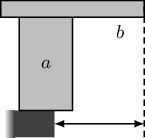}
\end{subfigure}
\hspace{-1mm}%
\begin{subfigure}[b]{.24 \textwidth}
\centering
\includegraphics{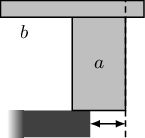}
\end{subfigure}
\hspace{1mm}%
\begin{subfigure}[b]{.24 \textwidth}
\centering
\includegraphics{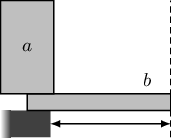}
\end{subfigure}
\caption{All four potentially optimal stack configurations without and with counterbalancing.}
\label{figure:example_rearranging}
\end{figure}

\newpage
If counterbalancing is permitted, two more stack configurations are available (see Figure~\ref{figure:example_rearranging}). With counterweight $m_b$, the overhang of block~$a$ is
\[ w_a \bigg( 2 - \frac{m_a}{m_b + m_a} \bigg) = 2 - \frac{k}{k+1}, \] 
which is even less than for the first stack. 
However, if block~$b$ protrudes and $a$ serves as a counterweight, the overhang is
\[ w_b \bigg( 2 - \frac{m_b}{m_a + m_b} \bigg) = k \bigg( 2 - \frac{1}{k+1} \bigg) = 2k - \frac{k}{k+1}, \]
which, for $k$ toward infinity, is up to twice as much as without counterbalancing. \\

This example demonstrates the power of rearranging the blocks, motivating the search for an optimal stack configuration, both with and without counterbalancing. 
In the following sections, we will analyze whether and how good stack configurations can be found, starting with a few simple stacking rules.

\subsection{Simple Stacking Rules.}

Notice that in the third stack in Figure~\ref{figure:example_rearranging}, a wide and light counterweight block extends farther than the short and heavy block that was supposed to protrude. Such a stack, of course, cannot be optimal. 
In fact, this stack configuration could have been ruled out from the outset because short and heavy blocks should never be used as the protruding block.
In this section, we formalize this and other simple stacking rules.

To follow the arguments below, it is crucial to understand how the overhang of a stack is formed and how it changes when, for example, two blocks are swapped. 
Recall from~\eqref{equation:overhang_fixed_configuration} that the contribution of a right-aligned block, apart from its own parameters, depends only on the aggregated mass of the blocks on top of it. 
Hence, when two blocks are swapped, this change does not affect the overhang generated by all blocks above and below. 
Moreover, remember that, for counterweight blocks, only their total mass is relevant for calculating the overhang.

\begin{lemma} \label{result:lemma:condition_stacking_order}
Consider an optimal stack configuration and let $a,b \in B$ be two right-aligned blocks, with block $a$ directly on top of block $b$. Then
\[ \frac{w_a}{M + m_a} \geq \frac{w_b}{M + m_b}, \]
where $M$ is the total mass of the blocks on top of block $a$.
\end{lemma}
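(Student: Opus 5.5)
The plan is an exchange argument. Suppose, for contradiction, that
\[ \frac{w_a}{M+m_a} < \frac{w_b}{M+m_b}. \]
We swap $a$ and $b$ and keep every other block in place, in particular all counterweights. We then show that the overhang given by \eqref{equation:overhang_fixed_configuration} strictly increases, which contradicts optimality. The new order can be completed to a balanced stack attaining the value of \eqref{equation:overhang_fixed_configuration}: we right-align all blocks from the protruding one downwards and place the counterweights as in Theorem~\ref{result:theorem:stack_configuration}. So it suffices to compare the values of \eqref{equation:overhang_fixed_configuration}.

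As noted before the lemma, the contributions of all blocks other than $a$ and $b$ are unchanged. Their contributions depend only on their own parameters and on the mass above them, and for blocks below $b$ that mass is $S := M+m_a+m_b$ in both orders. Since $a$ and $b$ are both right-aligned, Theorem~\ref{result:theorem:stack_configuration} gives $a,b\ge p$. Because $a$ lies above $b$, the block $b$ cannot be the protruding block. This leaves two cases: neither block protrudes, or $a=p$.

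\emph{Case 1: $a\neq p$.} The original two contributions are $w_am_a/(M+m_a)+w_bm_b/S$, and after the swap they are $w_bm_b/(M+m_b)+w_am_a/S$. Subtracting and using $1/(M+m_a)-1/S = m_b/((M+m_a)S)$, together with the analogous identity for $b$, the difference (original minus swapped) factors as
\[ \frac{m_am_b}{S}\Big(\frac{w_a}{M+m_a}-\frac{w_b}{M+m_b}\Big). \]
This is negative under our assumption, which gives the contradiction.

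\emph{Case 2: $a=p$, so $M$ is the total counterweight mass.} After the swap, $b$ becomes the protruding block and the counterweights stay on top. I will substitute $w_a=\alpha(M+m_a)$ and $w_b=\beta(M+m_b)$ with $\alpha<\beta$. A protruding contribution then simplifies to $w_a(2-m_a/(M+m_a))=\alpha(2M+m_a)$, and similarly $\beta(2M+m_b)$ for $b$. The swapped value minus the original value becomes $\beta C_b-\alpha C_a$, where
\[ C_x = 2M+m_x-\frac{(M+m_x)m_x}{S}. \]
The key identity, which I expect to be the main computational step, is $C_b-C_a=(m_b-m_a)\bigl(1-(M+m_a+m_b)/S\bigr)=0$. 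Hence the difference equals $(\beta-\alpha)C_b$.

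It remains to check $C_b>0$. Since $(M+m_b)/S<1$, we get $C_b>2M+m_b-m_b=2M\ge 0$. So the swap strictly increases the overhang, which again contradicts optimality. This case is the main obstacle, because the protruding term has a different form from the other terms, and a naive grouping does not factor. The $\alpha,\beta$ reparametrization is what makes it work.
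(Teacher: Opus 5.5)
Your proof is correct, and your Case~1 is the paper's argument. The paper makes the same swap and factors the gain as $\frac{m_am_b}{M+m_a+m_b}\bigl(\frac{w_b}{M+m_b}-\frac{w_a}{M+m_a}\bigr)$, just as you do.

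Where you differ is coverage. The paper's computation takes the contribution of the upper block to be $w_am_a/(M+m_a)$ without comment. That is only valid when $a$ is not the protruding block, or when there are no counterweights ($M=0$). The case $a=p$ with $M>0$, which you handle in Case~2, does not appear in the paper's proof. It is nevertheless needed later: in the proof of Lemma~\ref{result:lemma:special_condition_protruding}, Corollary~\ref{result:corollary:special_condition_stacking_order} has to exclude $b_\ast$ sitting directly beneath a protruding block that carries counterweights. So your argument is the more complete one.

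Your Case~2 is also easier than you make it. Since $w_p\bigl(2-\frac{m_p}{M_p}\bigr)=\frac{w_pm_p}{M_p}+2M\,\frac{w_p}{M_p}$, the gain from the swap is the Case~1 expression plus $2M\bigl(\frac{w_b}{M+m_b}-\frac{w_a}{M+m_a}\bigr)$. That is, the gain equals $\bigl(\frac{m_am_b}{S}+2M\bigr)\bigl(\frac{w_b}{M+m_b}-\frac{w_a}{M+m_a}\bigr)$, which is your $(\beta-\alpha)C_b$ obtained without any reparametrization. The naive grouping does factor once the protruding term is split this way.

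A minor wording point: the rearranged stack has overhang \emph{at least} the value of \eqref{equation:overhang_fixed_configuration}, not necessarily equal to it, because a wide counterweight could stick out beyond the designated protruding block. The inequality is all your contradiction needs.
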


\begin{proof}
Assume the contrary. According to~\eqref{equation:overhang_fixed_configuration}, swapping blocks $a$ and $b$ leads to the following change in the overhang:
\begin{align*}
\frac{w_b m_b}{M+m_b} + \frac{w_a m_a}{M+m_b+m_a} - \Big( \frac{w_a m_a}{M+m_a} + \frac{w_b m_b}{M+m_a+m_b} \Big) \, .
\end{align*}
By bringing all four fractions to a common denominator and 
suitably grouping the resulting terms, we find that this is equal to
\begin{align*}
&\frac{w_b m_b m_a}{(M+m_b)(M+m_a+m_b)} - \frac{w_a m_a m_b}{(M+m_a)(M+m_b+m_a)} \\
&= \frac{m_b m_a}{M + m_b + m_a} \cdot \Big( \frac{w_b}{M + m_b} - \frac{w_a}{M + m_a} \Big) 
> 0.
\end{align*}
This contradicts the optimality of the original stack configuration.
\end{proof}

\begin{remark}
Note that this condition is not sufficient for finding the best stacking order of right-aligned blocks, as the following counterexample illustrates. Let \mbox{$w_1 = 11$}, \mbox{$w_2 = 21$}, \mbox{$w_3 = 33$}, and $m_1 = 1$, $m_2 = 2$, $m_3 = 4$ be the parameters of blocks $1$, $2$, and $3$ of a fully right-aligned stack. Then the stacking order $1,2,3$ fulfills the condition, but a strictly better solution is given by the sequence $2,3,1$, as shown in Figure~\ref{figure:remark}. \vspace{-1mm}%
\begin{figure}[ht] 
\centering
\begin{subfigure}[b]{.32 \textwidth}
\centering
\includegraphics{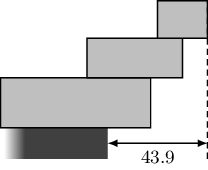}
\caption{}
\end{subfigure}
\begin{subfigure}[b]{.32 \textwidth}
\centering
\includegraphics{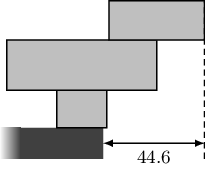}
\caption{}
\end{subfigure}
\begin{subfigure}[b]{.32 \textwidth}
\centering
\includegraphics{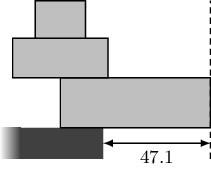}
\caption{}
\end{subfigure}
\caption{Both (a) and (b) satisfy the condition of Lemma~\ref{result:lemma:condition_stacking_order}, but (b) has a slightly larger overhang. If counterbalancing is permitted, stack configuration (c) is optimal.}
\label{figure:remark}
\end{figure}
\end{remark}

\begin{corollary} \label{result:corollary:special_condition_stacking_order}
Let $a, b \in B$ be two right-aligned blocks in an optimal stack configuration. If $w_a > w_b$ and $w_a / m_a > w_b / m_b$, block $a$ cannot be placed directly underneath~$b$. In particular, this is satisfied if $w_a > w_b$ and $m_a \leq m_b$.
\end{corollary}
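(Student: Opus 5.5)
We argue by contradiction using Lemma~\ref{result:lemma:condition_stacking_order}. Suppose that in an optimal stack configuration the right-aligned block $a$ lies directly underneath the right-aligned block $b$, and let $M \geq 0$ be the total mass of the blocks on top of $b$. The lemma, applied with $b$ as the upper block and $a$ as the lower one, gives
\[ \frac{w_b}{M + m_b} \geq \frac{w_a}{M + m_a}. \]
So it suffices to show that the hypotheses $w_a > w_b$ and $w_a/m_a > w_b/m_b$ force the strict reverse inequality $w_a (M + m_b) > w_b (M + m_a)$.

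We expand both sides and compare them term by term. The reverse inequality reads $w_a M + w_a m_b > w_b M + w_b m_a$. Since $M \geq 0$ and $w_a > w_b$, we have $w_a M \geq w_b M$. Since the masses are positive, $w_a/m_a > w_b/m_b$ is equivalent to $w_a m_b > w_b m_a$, which is strict. Adding the two inequalities gives the desired strict inequality, contradicting the lemma. Hence $a$ cannot be placed directly underneath $b$.

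For the final sentence, suppose $w_a > w_b$ and $m_a \leq m_b$. Then
\[ \frac{w_a}{m_a} \geq \frac{w_a}{m_b} > \frac{w_b}{m_b}, \]
so both hypotheses of the first part hold and the claim follows.

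There is no real obstacle. The only points needing care are to apply the lemma with the roles of $a$ and $b$ in the correct order (upper block $b$, lower block $a$), and to note that strictness comes from the ratio condition, so the case $M=0$ causes no trouble.
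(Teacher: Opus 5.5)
Your proposal is correct and follows essentially the same route as the paper: the paper writes $\frac{w_a}{M+m_a}-\frac{w_b}{M+m_b}=\frac{M(w_a-w_b)+(w_a m_b-w_b m_a)}{(M+m_a)(M+m_b)}>0$ for every $M\ge 0$ and then invokes Lemma~\ref{result:lemma:condition_stacking_order}, which is the same two-term comparison you make after cross-multiplying. Your explicit check of the ``in particular'' clause ($w_a/m_a \ge w_a/m_b > w_b/m_b$) fills in a step the paper leaves implicit.
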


\begin{proof}
\[ \frac{w_a}{M + m_a} - \frac{w_b}{M + m_b} = \frac{M (w_a - w_b) + (w_a m_b - w_b m_a) }{(M + m_a)(M + m_b)} > 0\]
for any $M \geq 0$. The claim follows from Lemma~\ref{result:lemma:condition_stacking_order}.
\end{proof}

Extending Corollary~\ref{result:corollary:special_condition_stacking_order} to the case $w_a / m_a = w_b / m_b$, it is easy to see that right-aligned blocks whose mass is proportional to their width can always be ordered by their width, with the widest on top, to obtain a stack of maximum overhang. 
As mentioned before, this was proven by Treeby~\cite{treeby2018further} and can be seen for both stacks in Figure~\ref{figure:treeby_blocks}.

In a more general sense, Corollary~\ref{result:corollary:special_condition_stacking_order} confirms that for right-aligned blocks, wider and lighter blocks should be placed above the shorter and heavier ones.
In particular, if one block is wider and at least as light as all the others, Corollary~\ref{result:corollary:special_condition_stacking_order} implies that this block cannot be placed beneath any right-aligned block. In this case, we can even prove that it cannot be used as a counterweight either. Hence, the block must protrude in order for the stack to achieve the best possible overhang.
\begin{lemma} \label{result:lemma:special_condition_protruding}
Let $b_\ast \in B$ be a block with $w_{b_\ast} > w_b$ and $m_{b_\ast} \leq m_b$ for all $b \in B \setminus \{ b_\ast \}$. Then, $b_\ast$ protrudes in any optimal stack configuration. 
\end{lemma}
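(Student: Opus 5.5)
The plan is a case analysis on the role of $b_\ast$ in an optimal stack configuration. By Theorem~\ref{result:theorem:stack_configuration}, every block is exactly one of three things: a counterweight above $p$, the protruding block $p$ itself, or a right-aligned block below $p$. I will assume $b_\ast \neq p$ and, in each remaining case, exhibit a configuration with strictly larger overhang, contradicting optimality. The overhang of every configuration will be evaluated with \eqref{equation:overhang_fixed_configuration}. This is legitimate because, for a given choice of counterweights, protruding block and order, the stack described in Theorem~\ref{result:theorem:stack_configuration} is balanced and attains exactly that value.

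\emph{Case 1: $b_\ast$ is a counterweight.} I will swap the roles of $b_\ast$ and $p$: $p$ joins the counterweights and $b_\ast$ protrudes. Let $C$ be the old total counterweight mass. The new counterweight mass is $C - m_{b_\ast} + m_p$, so the total mass $M_p$ of the blocks down to the protruding one is unchanged. Hence every $M_i$ for $i > p$ is unchanged, and all lower terms in \eqref{equation:overhang_fixed_configuration} stay the same. The protruding term changes from $w_p(2 - m_p/M_p)$ to $w_{b_\ast}(2 - m_{b_\ast}/M_p)$. This is strictly larger, since $w_{b_\ast} > w_p$, $m_{b_\ast} \leq m_p$, and $2 - m/M_p \geq 1 > 0$.

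\emph{Case 2: $b_\ast$ is right-aligned below $p$.} Consider the block $a$ directly above $b_\ast$. If $a \neq p$, then $a$ is an ordinary right-aligned block, and Corollary~\ref{result:corollary:special_condition_stacking_order} (applied with $w_{b_\ast} > w_a$, $m_{b_\ast} \le m_a$) forbids $b_\ast$ from lying directly under $a$, which is a contradiction. The main obstacle is the remaining subcase $a = p$. There Lemma~\ref{result:lemma:condition_stacking_order} does not apply directly, because $p$ contributes via the protruding formula rather than $w m / M$.

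For that subcase I will swap $p$ and $b_\ast$, keeping the counterweight mass $C$ fixed and setting $T = C + m_p + m_{b_\ast}$. All other terms are unaffected, and the change in overhang can be regrouped as
\[ w_{b_\ast} f(m_{b_\ast}) - w_p f(m_p), \qquad f(m) = 1 + \frac{C}{C+m} - \frac{m}{T}. \]
Here $f$ is decreasing in $m$, and $f(m_p) > 0$ because $m_p < T$. Therefore
\[ w_{b_\ast} f(m_{b_\ast}) \ge w_{b_\ast} f(m_p) > w_p f(m_p), \]
so the swap strictly increases the overhang, again contradicting optimality.

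Since every case with $b_\ast \ne p$ leads to a contradiction, $b_\ast$ must protrude.
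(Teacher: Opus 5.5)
Your proof is correct and follows the paper's own argument. The paper likewise rules out the right-aligned case via Corollary~\ref{result:corollary:special_condition_stacking_order}, and it handles the counterweight case by exactly your swap of $b_\ast$ with $p$, keeping $M_p$ fixed and bounding the gain below by $(w_{b_\ast}-w_p)(2-m_p/M_p)>0$.

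Your separate treatment of $b_\ast$ lying directly under $p$ adds care the paper lacks. The paper simply applies the corollary there, regarding $p$ as a right-aligned block, even though the proof of Lemma~\ref{result:lemma:condition_stacking_order} uses the non-protruding contribution $w_a m_a/(M+m_a)$. Your monotonicity argument with $f$ closes this case cleanly. In fact, redoing that swap with the protruding term gives the gain $\bigl(\tfrac{m_a m_b}{M+m_a+m_b}+2M\bigr)\bigl(\tfrac{w_b}{M+m_b}-\tfrac{w_a}{M+m_a}\bigr)$, so the lemma does extend to $a=p$.
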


\begin{proof}
Consider a stack configuration where $b_\ast$ does not protrude. By Corollary~\ref{result:corollary:special_condition_stacking_order}, $b_\ast$~cannot be a non-protruding right-aligned block. Therefore, $b_\ast$~must serve as a counterweight block above the protruding block $p \in B \setminus \{b_\ast\}$. Let $M$ be the total mass of the other counterweight blocks. 
Now consider the stack configuration with $b_\ast$ and~$p$ swapped and $b_\ast$ protruding. 
Keep in mind that the overhang generated by the blocks below the protruding block is not affected by this change (cf.~\eqref{equation:overhang_fixed_configuration}). 
Since $m_{b\ast} \leq m_p$, the new stack configuration achieves a larger overhang than the original one:
\begin{align*}
& w_{b_\ast} \bigg( 2 - \frac{m_{b_\ast}}{M + m_p + m_{b_\ast}} \bigg) 
- w_p \bigg( 2 - \frac{m_p}{M + m_{b_\ast} + m_p} \bigg) \\
&\geq \big( \underbrace{w_{b_\ast} - w_p}_{> 0} \big) \cdot \bigg( \underbrace{2 - \frac{m_p}{M + m_{b_\ast} + m_p}}_{\geq 1} \bigg) > 0.
\end{align*} 

Thus, in an optimal stack configuration, $b_\ast$ is neither a counterweight block nor a non-protruding right-aligned block. 
\end{proof}

Additional insights similar to this result can be obtained. However, in the next section, we demonstrate that finding an optimal stack configuration is provably difficult despite all these insights into the Block-Stacking Problem.

\section{NP-Hardness.} \label{section:np_hardness}

For a fixed stacking order and a designated protruding block~$p$, the achievable overhang is given in closed form by~\eqref{equation:overhang_fixed_configuration}. 
The goal of the Block-Stacking Problem is to identify one of these stack configurations that allows for the largest possible overhang.
Based on~\eqref{equation:overhang_fixed_configuration}, using a bijective function $\pi \colon B \to \{1,\dots,\lvert B \rvert\}$ to represent the stacking order (top to bottom), this can be concisely formulated as 
\begin{equation} \label{equation:overhang_flexible_configuration}
\max_{p \in B}\ \max_\pi\ \  w_p  \bigg( 2 - \frac{m_p}{M_p} \bigg) + \sum_{i :\, \pi(i) > \pi(p)} \frac{w_i m_i}{M_i}, \text{\ \ \ where \ \ \ } M_i = \sum_{j:\, \pi(j) \leq \pi(i)} m_j. 
\end{equation}
Since there are $n \cdot n!$, i.e., exponentially many possible stack configurations, computing the potential overhang for all of them is a highly inefficient procedure that would exceed any available computing power even for moderately small~$n$.
The results discussed in the previous section may reduce the time required to find an optimal solution to some extent. However, these improvements are not sufficient to devise an efficient algorithm. 

In complexity theory, an algorithm is considered efficient if, for any possible input, the number of computational steps required is polynomial in the size of the input parameters. 
An algorithm that satisfies this condition is also called a polynomial-time algorithm. 
No polynomial-time algorithm has ever been found for many interesting problems, such as the famous Traveling Salesperson Problem. 
This led to the creation of the complexity class of so-called NP-hard problems. If any one of them can be solved in polynomial time, then all of them can (in which case $\text{P} = \text{NP}$, which is considered unlikely).

In this section, we demonstrate that Block Stacking is NP-hard. 
To do this, we show that any efficient algorithm for the Block-Stacking Problem could be utilized to efficiently solve some problem already known to be NP-hard. We will 
show that solving the Block-Stacking Problem in polynomial time would enable us to solve the  NP-hard Partition Problem in polynomial time. For a detailed introduction to complexity theory and a proof of NP-hardness of the Partition Problem, see, e.g.,~\cite{garey1979computers}.

\begin{definition}[Partition Problem]
Given a set of positive integers $A = \{a_1,\dots,a_n\}$, decide whether the elements can be partitioned into two disjoint subsets $A_1 \cup A_2 = A$, such that the sum of elements in both subsets is equal, i.e., $\sum_{a_i \in A_1} a_i = \sum_{a_i \in A_2} a_i$.
\end{definition}

If such a partition exists, we call it a \textit{perfect partition}. Without loss of generality, we can assume $\sum_{a_i \in A} a_i$ to be even. Accordingly, let $T = \frac{1}{2} \sum_{a_i \in A} a_i \geq 1$ denote the targeted split. 

\subsection{Solving the Partition Problem via the Block-Stacking Problem.}

Given an instance $A = \{a_1,\dots,a_n\}$ of the Partition Problem, consider the following set $B$ of blocks:
\begin{itemize}
\item for $i = 1,\dots,n$, let $i \in B$ be a block of mass $m_i = a_i$ and half-width $w_i = 1$,
\item let $b^\bullet \in B$ be a block of mass $m_\bullet = 1 $ and half-width $ w_\bullet = \big(2T + \frac{5}{4} \big)^5$, and
\item let $b^\ast \in B$ be a block of mass $m_\ast = 1/4 $ and half-width 
\[ w_\ast = \frac{w_\bullet m_\bullet}{m_\ast} \frac{(T+m_\ast)^2}{(T+m_\ast+m_\bullet)^2} = 4 w_\bullet \bigg( 1 - \frac{1}{T + \frac{5}{4}} \bigg)^2. \]
\end{itemize}

\begin{figure}[ht] 
\centering
\includegraphics{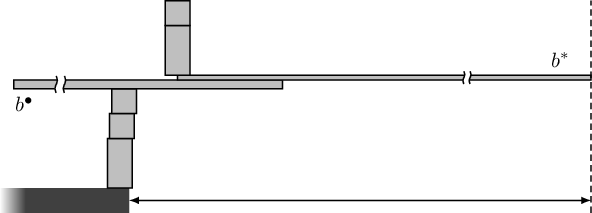}
\caption{Schematic structure of a $\bullet \ast$-protruding stack. $b^\bullet$ and $b^\ast$ together partition the remaining blocks into counterweights and right-aligned blocks.}
\label{figure:bullet_star_protruding}
\end{figure}

The goal is to prove that solving the Block-Stacking Problem for this set of blocks automatically reveals the answer to the corresponding instance of the Partition Problem.
More specifically, we want to show that in any optimal stack configuration, the block $b^\ast$ protrudes and $b^\bullet$ is placed directly underneath. 
As a result, together they partition the blocks $1,\dots,n$ into counterweights and right-aligned blocks, as illustrated in Figure~\ref{figure:bullet_star_protruding}.
We refer to a stack of this structure as \textit{$\bullet \ast$-protruding}. 
The weight and mass of the blocks were conveniently chosen to satisfy all the conditions of Lemma~\ref{result:lemma:bullet_star_protruding} and~\ref{result:lemma:omin_omax}, as well as to simplify the corresponding proofs as much as possible. While having two auxiliary blocks offered a certain degree of freedom in the choice of these parameters, note that this construction was not possible using a single auxiliary block.
Before showing that this helps in finding a perfect partition, we first prove that any optimal solution to the given instance of the Block-Stacking Problem is indeed $\bullet \ast$-protruding.

\begin{lemma} \label{result:lemma:bullet_star_protruding}
Any optimal stack configuration is $\bullet \ast$-protruding.
\end{lemma}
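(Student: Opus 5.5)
The approach is to compare the overhang of any non-$\bullet\ast$-protruding stack against an explicit $\bullet\ast$-protruding stack, using the size gap created by the huge half-widths $w_\bullet \approx w_\ast/4$. Write $W = w_\bullet = (2T+\tfrac54)^5$ and note $w_\ast = 4W(1-\tfrac{1}{T+5/4})^2$, so $w_\ast$ is somewhat less than $4W$ but much larger than $2W$. All other blocks have half-width $1$. By \eqref{equation:overhang_fixed_configuration}, the contribution of these $n$ small blocks to any overhang is at most $\sum_i w_i(\cdot)\le 2n \le 4T$ roughly. I first record a lower bound for a $\bullet\ast$-protruding stack: put all small blocks as counterweights, so $M_\ast = 2T+\tfrac14$. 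Then $b^\ast$ contributes $w_\ast(2 - \frac{1/4}{2T+1/4})$ and $b^\bullet$ contributes $w_\bullet \frac{1}{2T+5/4}$. This already gives essentially $2w_\ast - O(W/T)$.

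Next, I case-split on the protruding block $p$ and the position of $b^\ast, b^\bullet$.

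Case 1: $p$ is a small block. Then its contribution is at most $2$. Each of $b^\ast,b^\bullet$, if right-aligned below $p$, contributes $w m/M$ with $M\ge m+ m_p\ge m+1$. This gives at most $w_\ast\cdot\frac{1/4}{5/4}=w_\ast/5$ and $W/2$. If they are counterweights they contribute nothing. The total is far below $2w_\ast - O(W/T)$.

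Case 2: $p=b^\bullet$. Its contribution is less than $2W$. Then $b^\ast$ is either a counterweight or right-aligned below with $M\ge 5/4$, contributing at most $w_\ast/5$. The total is at most $2W + w_\ast/5 + O(T)$. Since $w_\ast\approx 4W$, this is about $2.8W$, versus roughly $8W$; that is too small.

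Case 3: $p=b^\ast$ but $b^\bullet$ is a counterweight or not directly below. If $b^\bullet$ is a counterweight, the overhang is at most $2w_\ast - w_\ast\frac{1/4}{M_\ast}+O(T)$ with $M_\ast\le 2T+5/4$. I must beat this with the comparison stack. In the comparison stack, $b^\bullet$ contributes $W/(2T+5/4)$, whereas moving $b^\bullet$ into the counterweights gains only $w_\ast\cdot\frac14(\frac{1}{2T+1/4}-\frac{1}{2T+5/4}) = O(W/T^2)$. The precise choice of $w_\ast$ should make this comparison work. If $b^\bullet$ is right-aligned but some small blocks sit between $b^\ast$ and $b^\bullet$, I move $b^\bullet$ up directly under $b^\ast$. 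By the swap computation in Lemma~\ref{result:lemma:condition_stacking_order}, swapping $b^\bullet$ above a small block $i$ changes the overhang by
\[ \frac{m_\bullet m_i}{M+m_\bullet+m_i}\Big(\frac{w_\bullet}{M+m_\bullet}-\frac{w_i}{M+m_i}\Big), \]
which is positive since $w_\bullet/(M+1)\ge W/(2T+2)\gg 1\ge w_i/(M+m_i)$.

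The main obstacle is Case 3 with $b^\bullet$ used as a counterweight. There the two quantities differ only by lower-order terms, roughly $W/T$ against $W/T^2$ plus the $O(T)$ small-block noise. I would handle it by exact computation rather than asymptotics. Fix the set $S$ of small counterweights with mass $s$. The counterweight-$b^\bullet$ version gives at most $w_\ast(2-\frac{1/4}{s+5/4}) + \sum_{\text{rest}}$. The $\bullet\ast$ version with the same $S$ gives $w_\ast(2-\frac{1/4}{s+1/4}) + \frac{W}{s+5/4} + (\text{rest with } M \text{ increased by } 1)$. The difference is then
\[ \frac{W}{s+5/4}-\frac{w_\ast}{4}\cdot\frac{1}{(s+1/4)(s+5/4)} \]
minus at most $2n$. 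Using $w_\ast/4 = W\frac{(T+1/4)^2}{(T+5/4)^2}$ and $s\le 2T$, this is at least of order $W/T^3$ when $s$ is near $T$. I expect the gap to be smallest near $s\approx 0$, and there I would directly check that the bracket is still positive. The factor $W=(2T+5/4)^5$ is what dominates the $O(T)$ noise, and checking this inequality uniformly over $s\in[0,2T]$ is the delicate step.
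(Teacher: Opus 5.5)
Your proposal has a genuine gap in Case~3, where $b^\bullet$ is a counterweight, at $s=0$, i.e.\ when $b^\bullet$ is the \emph{only} counterweight. There your bracket equals
\[
\frac{W}{5/4}-\frac{w_\ast}{4}\cdot\frac{1}{(1/4)(5/4)}=\frac{4}{5}\,(W-w_\ast)<0,
\]
because $w_\ast\ge 4W(5/9)^2=\tfrac{100}{81}W>W$ for every $T\ge 1$. The construction needs $w_\ast>W$ so that $b^\ast$ protrudes. So moving $b^\bullet$ directly under $b^\ast$ while keeping $S=\emptyset$ leaves $b^\ast$ with no counterweight at all, and it strictly \emph{decreases} the overhang. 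The positivity check you plan near $s\approx 0$ therefore cannot succeed. Treating $s$ as a continuous parameter on $[0,2T]$ also cannot work, since the bracket is negative on an initial interval.

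What rescues every other subcase is integrality. If $S\neq\emptyset$, then $s\ge 1$, and since $w_\ast/4<W$ the bracket exceeds $W\bigl(1-\frac{1}{s+1/4}\bigr)\ge W/5$. For $s\ge 1$ your computation then coincides with the paper's. No $-2n$ correction is needed there: moving $b^\bullet$ from the counterweights to the slot directly under $b^\ast$ does not change the mass above any lower right-aligned block, so those terms are identical.

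The case $s=0$ needs a different competitor. The paper swaps $b^\bullet$ with the small block $i$ directly beneath $b^\ast$. The counterweight mass then rises from $1$ to $m_i\ge 1$, so the term for $b^\ast$ does not decrease. All lower blocks keep the same mass above them. The slot under $b^\ast$ now yields $W/(m_i+\frac54)$ instead of $m_i/(m_i+\frac54)$, a strict gain since $W>2T\ge m_i$.

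Alternatively, your own all-counterweight stack also handles this case. The $s=0$ configuration has overhang at most $\frac95 w_\ast+2T$. Your stack has at least $\frac{17}{9}w_\ast$, and $\frac{4}{45}w_\ast>2T$.

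A smaller point concerns Case~2: $w_\ast$ is not ``much larger than $2W$'' for $T\in\{1,2\}$; at $T=1$, $w_\ast=\frac{100}{81}W$. So your heuristic ``$2.8W$ versus $8W$'' is not a proof. The inequality does still hold (about $2.64W$ versus $2.25W+O(T)$ at $T=1$). It is simpler, though, to dispose of Cases~1 and~2 at once via Lemma~\ref{result:lemma:special_condition_protruding}, as the paper does, using $w_\ast>w_\bullet>1$ and $m_\ast<m_\bullet\le m_i$.
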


\begin{proof}
We will prove this claim in three steps. In any optimal stack configuration
\begin{itemize}
\item[a)] $b^\ast$ protrudes,
\item[b)] $b^\bullet$ is a right-aligned block, and
\item[c)] $b^\bullet$ is placed directly underneath $b^\ast$.
\end{itemize}  

\textit{a)} By Lemma~\ref{result:lemma:special_condition_protruding}, it suffices to show $w_\ast > w_\bullet$, $w_\ast > w_i$, and $m_\ast \leq m_\bullet$, $m_\ast \leq m_i$ for all $i \in B \setminus \{b^\ast, b^\bullet\}$. The latter is trivial, as $m_\ast = 1/4 < 1 = m_\bullet \leq m_i$. Moreover, 
\begin{align*}
w_\ast &= 4 w_\bullet \bigg( 1 - \frac{1}{T + \frac{5}{4}} \bigg)^2 \geq 4 w_\bullet \bigg( 1 - \frac{1}{1 + \frac{5}{4}} \bigg)^2 > w_\bullet = \big(2T + \tfrac{5}{4} \big)^5 > 1 = w_i.
\end{align*}

\textit{b)} Suppose $b^\bullet$ serves as a counterweight. First, consider the case where $b^\bullet$ is the only counterweight block. This means, there is a right-aligned block $i \in B \setminus \{b^\ast, b^\bullet\}$ directly below $b^\ast$. Swapping $b^\bullet$ and $i$ increases the overhang by
\begin{equation*}
w_\ast \Big( \underbrace{\frac{m_i}{m_i + m_\ast} - \frac{m_\bullet}{m_\bullet + m_\ast}}_{\geq 0 \text{ as } m_i \geq m_\bullet} \Big) + \frac{w_\bullet m_\bullet - w_i m_i}{m_\ast + m_\bullet + m_i} 
\geq \frac{\big(2T + \frac{5}{4} \big)^5 - 2T}{m_\ast + m_\bullet + m_i} > 0.
\end{equation*}

Now consider the case where the total mass of the counterweight blocks, $b^\bullet$ excluded, is $M \geq 1$. 
Moving $b^\bullet$ directly under $b^\ast$ increases the overhang by 
\begin{align*}
& w_\ast \Big( 2 - \frac{m_\ast}{M+m_\ast} \Big) + \frac{w_\bullet m_\bullet}{M+m_\ast+m_\bullet} - w_\ast \Big( 2 - \frac{m_\ast}{M+m_\ast+m_\bullet} \Big) \\
& = \frac{w_\bullet m_\bullet}{M+m_\ast+m_\bullet} + w_\ast \Big( \frac{m_\ast}{M+m_\ast+m_\bullet} - \frac{m_\ast}{M+m_\ast} \Big) \\
& = \frac{m_\bullet}{M+m_\ast+m_\bullet} \cdot \Big( w_\bullet - \frac{w_\ast m_\ast}{M + m_\ast} \Big),
\end{align*}
which is strictly greater than zero, since $ M \geq 1$ and 
\begin{align*}
w_\ast m_\ast = w_\bullet \Big( 1 - \frac{m_\bullet}{T +m_\ast+m_\bullet} \Big)^2 < w_\bullet.
\end{align*}

\textit{c)} As $w_\bullet > w_i$ and $m_\bullet \leq m_i$ for all $i \in B \setminus \{b^\ast, b^\bullet\}$, it immediately follows from Corollary~\ref{result:corollary:special_condition_stacking_order} that $b^\bullet$ cannot be placed beneath any of those blocks in an optimal solution. Since we have already seen that $b^\bullet$ is a right-aligned block, it must be placed directly underneath the protruding block $b^\ast$. 
Therefore, any optimal stack configuration is $\bullet \ast$-protruding.
\end{proof}

By Lemma~\ref{result:lemma:bullet_star_protruding}, for any stack configuration with maximum overhang, $b^\bullet$ and $b^\ast$ partition the blocks $1,\dots,n$ into counterweights and right-aligned blocks.
Given any set of numbers $a_1,\dots,a_n$, we now want to show that if a perfect partition is possible, it is best to partition the corresponding blocks $1,\dots,n$ such that both the counterweights and right-aligned blocks each have total mass $T$.
To this end, we assume the total sum of elements $2T = \sum_{a_i \in A} a_i$ to be fixed and introduce the functions $O_{\min}(C)$ and $O_{\max}(C)$ as the minimum and maximum overhang achievable with a particular counterweight mass $C \in \{0,\dots,2T\}$. 
In order to find explicit expressions for $O_{\min}(C)$ and $O_{\max}(C)$, we prove the following lemma.

\begin{lemma} \label{result:lemma:splitting_blocks}
Replacing a block $i \in B \setminus \{b^\ast, b^\bullet\}$ by two blocks ${i_1}$, ${i_2}$ of the same half-width $w_{i_1}=w_{i_2}=w_i = 1$ and with masses $m_{i_1} + m_{i_2} = m_i$, the overhang does not decrease.
\end{lemma}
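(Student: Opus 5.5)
Since the maximum overhang of the new instance is at least the overhang of any particular configuration, we fix an optimal configuration of the original instance and exhibit a configuration of the split instance whose overhang is at least as large. By Lemma~\ref{result:lemma:bullet_star_protruding} we may take the original configuration to be $\bullet\ast$-protruding, though the argument below only uses the structure from Theorem~\ref{result:theorem:stack_configuration}. Block $i$ is then either a counterweight block or a non-protruding right-aligned block; it cannot protrude, since $b^\ast$ does. We place $i_1$ and $i_2$ consecutively in the position formerly occupied by $i$, with the same role as $i$, and keep every other block where it was.

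If $i$ is a counterweight, only the total counterweight mass enters~\eqref{equation:overhang_fixed_configuration}. That total is unchanged because $m_{i_1}+m_{i_2}=m_i$. Every right-aligned block other than $i$ keeps the same mass $M_j$ above it, so the value of~\eqref{equation:overhang_fixed_configuration} is identical and the overhang does not decrease.

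If $i$ is right-aligned, let $M$ be the mass of all blocks above $i$. In~\eqref{equation:overhang_fixed_configuration}, the term $\frac{w_i m_i}{M+m_i}$ is replaced by
\[ \frac{m_{i_1}}{M+m_{i_1}} + \frac{m_{i_2}}{M+m_{i_1}+m_{i_2}}, \]
using $w_{i_1}=w_{i_2}=w_i=1$. All other terms are unchanged: the blocks below see the same cumulative mass $M+m_i$, the blocks above are untouched, and the protruding block's term depends only on $M_p$, which is unchanged. It therefore suffices to show
\[ \frac{m_{i_1}}{M+m_{i_1}} \geq \frac{m_{i_1}}{M+m_i}. \]
Together with $\frac{m_{i_2}}{M+m_i}$, the right-hand side sums exactly to $\frac{m_i}{M+m_i}$, which gives the claim. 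The inequality holds because $M+m_{i_1}\le M+m_i$, and it is strict when $m_{i_2}>0$. The positivity of all masses, with $M\ge m_\ast>0$, ensures that no denominator vanishes.

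The only delicate point is checking that inserting two blocks in place of one leaves every other term of~\eqref{equation:overhang_fixed_configuration} unchanged, which holds because each term depends only on the cumulative mass above it. The remaining computation is a one-line comparison of fractions.
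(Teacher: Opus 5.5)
Your proposal is correct and is essentially the paper's proof. It uses the same case split (counterweight: the overhang is unchanged; right-aligned: put $i_1$ directly above $i_2$), and the same single comparison $\frac{m_{i_1}}{M+m_{i_1}}\ge\frac{m_{i_1}}{M+m_i}$. You do not need to start from an optimal configuration: the argument applies to any $\bullet\ast$-protruding stack, which is the form the paper needs to define $O_{\max}(C)$.
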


\begin{proof}
If $i$ serves as counterweight, the overhang remains unchanged. Otherwise, let $M$ be the mass of all blocks above $i$. Replacing $i$ by $i_1$ and $i_2$, where $i_1$ is on top of~$i_2$, the overhang of the stack does not decrease:
\begin{align*}
\frac{w_{i_1} m_{i_1}}{M + \underbrace{m_{i_1}}_{\leq m_i}} + \frac{w_{i_2} m_{i_2}}{M + \underbrace{m_{i_1} + m_{i_2}}_{=m_i}} - \frac{w_i m_i}{M + m_i} 
\geq w_i\ \frac{m_{i_1} + m_{i_2} - m_i}{M+m_i} = 0. \\[-\normalbaselineskip]\tag*{\qedhere}
\end{align*} 
\end{proof}

By Lemma~\ref{result:lemma:splitting_blocks}, a $\bullet \ast$-protruding stack has maximum overhang if the right-aligned blocks are split into as many blocks as possible.
Given that $a_1, \dots, a_n$ can only be positive integers, the overhang of the corresponding stack can be at most the overhang of a $\bullet \ast$-protruding stack whose regular blocks all have mass $m_i = 1$.
Consequently, for counterweight~$C$,
\begin{equation*}
\begin{gathered}
O_{\max}(C) = w_\ast \bigg(2-\frac{m_\ast}{C+m_\ast}\bigg) + \frac{w_\bullet m_\bullet}{C+m_\ast+m_\bullet} + H_C,  \\
\text{where\quad} H_C = \sum_{i=1}^{2T - C} \frac{1}{C + m_\ast + m_\bullet + i} \enspace . 
\end{gathered}
\end{equation*}
Conversely, the minimum overhang is achieved when there is only a single block of mass $2T-C$ under the $\bullet \ast$-protruding structure, as shown in Figure~\ref{figure:omin_omax}. Therefore,
\[ O_{\min}(C) = w_\ast \bigg(2-\frac{m_\ast}{C+m_\ast}\bigg) + \frac{w_\bullet m_\bullet}{C+m_\ast+m_\bullet} + \frac{2T-C}{2T + m_\ast + m_\bullet} \enspace . \]

\begin{figure}[ht] 
\centering
\begin{subfigure}[b]{.48 \textwidth}
\centering
\includegraphics{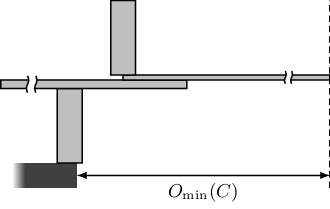}
\end{subfigure}
\begin{subfigure}[b]{.48 \textwidth}
\centering
\includegraphics{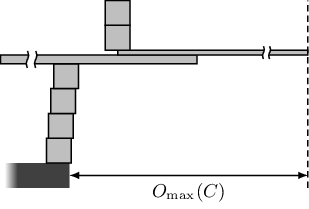}
\end{subfigure}
\caption{Structure of the $\bullet \ast$-protruding stacks with minimum and maximum overhang. By Lemma~\ref{result:lemma:omin_omax}, the overhang of the stack on the left is greater than that of any stack without a perfect partition, such as the one on the right.}
\label{figure:omin_omax}
\end{figure}

To demonstrate that a stack configuration with maximum overhang relies on a perfect partition, we prove that the worst possible stack configuration with counterweight~$T$ still has a greater overhang than any stack configuration with a different split. 

\begin{lemma} \label{result:lemma:omin_omax}
$O_{\min}(T) > O_{\max}(T \pm K)$ for all $K = 1,\dots,T$.
\end{lemma}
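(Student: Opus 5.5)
The plan is to separate the part of the overhang that depends only on the counterweight mass $C$ from the contribution of the right-aligned unit blocks. Define
\[ f(C) = w_\ast \Big(2-\frac{m_\ast}{C+m_\ast}\Big) + \frac{w_\bullet m_\bullet}{C+m_\ast+m_\bullet}. \]
Then $O_{\min}(T) = f(T) + \frac{T}{2T+m_\ast+m_\bullet}$ and $O_{\max}(C) = f(C) + H_C$. It therefore suffices to show that $f(T)-f(T\pm K)$ is larger than $H_{T\pm K}$, which is a crude quantity. The choice of $w_\ast$ is exactly the condition $f'(T)=0$. Indeed, $\frac{w_\ast m_\ast}{(T+m_\ast)^2} = \frac{w_\bullet m_\bullet}{(T+m_\ast+m_\bullet)^2}$. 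So I expect $f$ to have a sharp maximum at $C=T$, and I will quantify this exactly.

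The key step is an exact formula for $f(T)-f(C)$. Put $u = C + \frac14$ and $u_0 = T+\frac14$, and use $m_\bullet = 1$, so that $C+m_\ast+m_\bullet = u+1$. By the definition of $w_\ast$ we have $w_\ast m_\ast = w_\bullet\, u_0^2/(u_0+1)^2$. Hence
\[ f(C) = 2w_\ast + w_\bullet\, g(u), \qquad g(u) = \frac{1}{u+1} - \frac{u_0^2}{(u_0+1)^2\,u}. \]
We have $g(u_0) = \frac{1}{(u_0+1)^2}$. Bringing $g(u_0)-g(u)$ to the common denominator $(u_0+1)^2u(u+1)$, the numerator is $u(u+1) - (u_0+1)^2u + u_0^2(u+1)$. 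This should simplify to $(u-u_0)^2$. This gives the clean identity
\[ f(T) - f(C) = \frac{w_\bullet\,(C-T)^2}{(T+\frac54)^2\,(C+\frac14)(C+\frac54)}. \]
I expect this algebraic simplification to be the only real work. Once it is verified, everything else is bounding.

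Now take $C = T\pm K$ with $1\le K\le T$, so that $0\le C\le 2T$. Then $(C-T)^2 \ge 1$, $C+\frac14 < 2T+\frac54$, $C+\frac54 \le 2T+\frac54$ and $T+\frac54 \le 2T+\frac54$. With $w_\bullet = (2T+\frac54)^5$ this yields
\[ f(T)-f(C) > \frac{(2T+\tfrac54)^5}{(2T+\tfrac54)^4} = 2T+\tfrac54. \]
On the other hand, each summand of $H_C$ is at most $1/(C+\frac94)\le \frac49$, and there are $2T-C \le 2T$ summands. Hence $H_C < 2T$, and in particular $H_C - \frac{T}{2T+5/4} < 2T + \frac54$. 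Combining these bounds gives
\[ O_{\min}(T) - O_{\max}(T\pm K) = \big(f(T)-f(T\pm K)\big) + \frac{T}{2T+\frac54} - H_{T\pm K} > 0, \]
which is the claim. The main obstacle is the exact simplification in the previous paragraph. If it failed, I would fall back on a second-order Taylor bound for $f$ around $T$, using $f'(T)=0$ and a lower bound on $-f''$ over $[0,2T]$. That route is messier, but the large exponent in $w_\bullet$ gives ample room.
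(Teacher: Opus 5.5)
Your proof is correct and follows essentially the same route as the paper. The paper also uses the definition of $w_\ast$ to obtain exactly your closed form $f(T)-f(T\pm K)=K^2 w_\bullet m_\bullet^2/\big((T+m_\ast+m_\bullet)^2(T\pm K+m_\ast+m_\bullet)(T\pm K+m_\ast)\big)$, and then uses $w_\bullet=(2T+\frac54)^5$ to push it above $2T>H_{T\pm K}$; the only cosmetic differences are that the paper discards the nonnegative term $T/(2T+\frac54)$ of $O_{\min}(T)$ instead of carrying it along, and your substitution $u=C+\frac14$ makes the perfect-square numerator $(u-u_0)^2$ explicit.
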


\begin{proof}
It suffices to show
\begin{equation*} 
\begin{gathered}
 w_\ast \bigg( 2 - \frac{m_\ast}{T+m_\ast} \bigg) + \frac{w_\bullet m_\bullet}{T+m_\ast+m_\bullet} \\
 > w_\ast \bigg( 2 - \frac{m_\ast}{T \pm K+m_\ast} \bigg) + \frac{w_\bullet m_\bullet}{T \pm K+m_\ast+m_\bullet} + H_{T \pm K},
\end{gathered} 
\end{equation*} 
or, equivalently,
\begin{equation*}
\mp \frac{Kw_\ast m_\ast}{(T+m_\ast)(T\pm K+m_\ast)} \pm \frac{K w_\bullet m_\bullet}{(T+m_\ast+m_\bullet)(T \pm K+m_\ast+m_\bullet)} > H_{T \pm K} \enspace .
\end{equation*}
By definition of $w_\ast$,
\begin{align*}
& \mp \frac{Kw_\ast m_\ast}{(T+m_\ast)(T\pm K+m_\ast)} \pm \frac{K w_\bullet m_\bullet}{(T+m_\ast+m_\bullet)(T \pm K+m_\ast+m_\bullet)} \\
& = \mp \frac{K w_\bullet m_\bullet}{(T+m_\ast + m_\bullet)^2} \bigg( \frac{T+m_\ast}{T \pm K + m_\ast} - \frac{T+m_\ast +m_\bullet}{T \pm K + m_\ast + m_\bullet} \bigg) \\
& = \frac{K^2 w_\bullet m_\bullet^2}{(T+m_\ast+m_\bullet)^2 (T \pm K + m_\ast + m_\bullet) (T \pm K + m_\ast)} \enspace .  
\end{align*}
As $1 \leq K \leq T$, $m_\bullet = 1$, and $w_\bullet = (2T + m_\ast + m_\bullet)^5$, this is greater than $2T$, which in turn is greater than $H_{T \pm K}$.
\end{proof}

\begin{theorem} \label{result:theorem:np_hardness}
Determining an optimal stack configuration for the Block-Stacking Problem with counterbalancing is NP-hard.
\end{theorem}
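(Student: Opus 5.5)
We reduce the Partition Problem to the Block-Stacking Problem with counterbalancing, using the instance $B$ constructed above. The construction is polynomial in the input size. The blocks $1,\dots,n$ copy the numbers $a_i$. The auxiliary block $b^\bullet$ has half-width $(2T+\tfrac54)^5$, whose encoding length is $O(\log T)$ bits. The block $b^\ast$ has half-width $w_\ast = 4w_\bullet\bigl(1-\frac{1}{T+5/4}\bigr)^2$, a rational number whose numerator and denominator are polynomially bounded in the encoding length of $T$. Hence an optimal stack for $B$, produced by a hypothetical polynomial-time algorithm, is obtained in polynomial time. From it we read off a candidate partition and check it in linear time.

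The key steps are as follows. First, by Lemma~\ref{result:lemma:bullet_star_protruding}, every optimal stack is $\bullet\ast$-protruding. The blocks $1,\dots,n$ therefore split into the counterweights above $b^\ast$, with total mass $C\in\{0,\dots,2T\}$, and the right-aligned blocks below $b^\bullet$, with total mass $2T-C$. Set $A_1=\{a_i : i \text{ is a counterweight}\}$ and $A_2 = A\setminus A_1$.

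Second, we claim that a perfect partition exists if and only if this particular split has $C=T$. The ``if'' direction is immediate, since then $\sum_{A_1}a_i = T = \sum_{A_2}a_i$.

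For ``only if'', suppose a perfect partition $A_1^\ast \cup A_2^\ast$ exists. Build the $\bullet\ast$-protruding stack that uses the blocks of $A_1^\ast$ as counterweights and the blocks of $A_2^\ast$ as right-aligned blocks below $b^\bullet$, in any order. Its overhang is at least $O_{\min}(T)$. To justify this we have to argue that any arrangement of the right-aligned blocks of total mass $2T-C$ yields at least the overhang of a single merged block of that mass placed at the bottom of the stack. This follows by applying the computation of Lemma~\ref{result:lemma:splitting_blocks} in reverse: merging any two adjacent right-aligned unit-width blocks never increases the overhang, so repeatedly merging all of them into one block can only decrease it, down to the value in $O_{\min}(C)$.

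Third, consider any $\bullet\ast$-protruding stack with counterweight mass $C = T\pm K$ for some $K\ge1$. Its overhang is at most $O_{\max}(T\pm K)$, again via Lemma~\ref{result:lemma:splitting_blocks}, because splitting every right-aligned block into unit-mass blocks can only increase the overhang. By Lemma~\ref{result:lemma:omin_omax}, $O_{\max}(T\pm K) < O_{\min}(T)$. Such a stack is therefore strictly worse than the stack built from the perfect partition, so it cannot be optimal. Consequently, every optimal stack has $C=T$, and its counterweights form a perfect partition. If no perfect partition exists, the induced split simply fails the check, and we answer ``no'' correctly.

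The main obstacle is bookkeeping rather than new mathematics: making sure that $O_{\min}$ and $O_{\max}$ are genuine lower and upper bounds for every $\bullet\ast$-protruding stack with a given $C$, independent of how the right-aligned blocks are ordered. This is exactly the merging/splitting monotonicity described above. The other point requiring care is that the encoding size of $w_\bullet$ and $w_\ast$ stays polynomial. Both are routine, so the theorem follows by combining Lemmas~\ref{result:lemma:bullet_star_protruding}, \ref{result:lemma:splitting_blocks}, and~\ref{result:lemma:omin_omax}.
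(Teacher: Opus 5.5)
Your proof is correct and follows the paper's reduction from Partition: Lemma~\ref{result:lemma:bullet_star_protruding} forces every optimal stack to be $\bullet\ast$-protruding, and Lemma~\ref{result:lemma:omin_omax} forces its counterweight to be exactly $T$ whenever a perfect partition exists, so reading off the counterweight of an optimal stack decides the instance. You also spell out two things the paper leaves implicit: that $O_{\min}(C)$ really is a lower bound (by running Lemma~\ref{result:lemma:splitting_blocks} in reverse), and the final check in the reduction. The only slip is cosmetic: the numerator and denominator of $w_\ast$ are polynomial in $T$, hence have $O(\log T)$ bits, rather than being polynomial in the encoding length of $T$.
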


\begin{proof}
Given a set of positive integers $A = \{a_1,\dots,a_n\}$ for the Partition Problem, consider the instance of the Block-Stacking Problem as defined and analyzed above. Note that this instance and all its parameters are of polynomial size. By Lemma~\ref{result:lemma:bullet_star_protruding} and~\ref{result:lemma:omin_omax} it is clear that a perfect partition of $A$ exists if and only if all optimal solutions to the Block-Stacking Problem are $\bullet \ast$-protruding and utilize a counterweight of exactly~$T$. Given a stack configuration that optimally solves the Block-Stacking Problem, its counterweight can be efficiently calculated. Therefore, Block Stacking is NP-hard.
\end{proof}

\section{Block Stacking without Counterbalancing.} \label{section:stacks_without_counterbalancing}

In the previous section, we found that Block Stacking is, in general, NP-hard, making it effectively intractable to efficiently identify an optimal stack configuration. However, what is the situation if only stacks without counterbalancing, i.e., fully right-aligned stacks, as in Figure~\ref{figure:general_blocks} on the right, are allowed? For blocks whose mass is proportional to their width, Treeby~\cite{treeby2018further} showed that without counterbalancing, the Block-Stacking Problem is significantly easier to analyze and can be solved by simply sorting the blocks by their width (cf.~Lemma~4.2 in \cite{treeby2018further}, Corollary~\ref{result:corollary:special_condition_stacking_order}, and Figure~\ref{figure:treeby_blocks}). No such simple algorithm seems to be possible for blocks of arbitrary mass, and it remains an open question as to whether or not an exact solution can be found efficiently. Nevertheless, there is a lot to be said about this variant. Ultimately, the following discussion will even help to find an approximation algorithm for the general Block-Stacking Problem (i.e., with counterbalancing).\\

When a stack is fully right-aligned, the protruding block is always the one on top. Thus, only the stacking order needs to be optimized, and~\eqref{equation:overhang_flexible_configuration} can be simplified to
\begin{equation} \label{equation:bsp_no_counterbalancing}
 \max_\pi \, \sum_{i \in B} \frac{w_i m_i}{M_i} \text{,\quad where\quad } M_i = \sum_{j :\, \pi(j) \leq \pi(i)} m_j. 
\end{equation}

This version of the Block-Stacking Problem has not been studied before. 
It turns out to be structurally similar to two problems with entirely different backgrounds. The following two sections will prove a close connection between Block Stacking, Airplane Refueling, and Robust Appointment Scheduling.

\section{The Airplane Refueling Problem.} \label{section:ar}

The Airplane Refueling Problem, also known as the Jeep Caravan, the Jeep Convoy, or the $n$-Vehicle Exploration Problem, originates from the following mathematical puzzle (cf.~\cite{franklin1960range,vasquez2015airplane, woeginger2010airplane}). Suppose some cargo needs to be carried as far as possible, farther than a single aircraft could manage. Instead, a fleet of aircraft is deployed to accomplish the task. 
Fuel can be exchanged instantaneously and in flight between the airplanes. When a plane runs out of fuel or has passed all of it to the other airplanes, it drops out and no longer contributes to the fuel consumption. Thus, given the fuel consumption rate and tank volume of each aircraft, the Airplane Refueling Problem aims to determine a dropout sequence that maximizes the distance the last remaining aircraft can travel. 
Alternatively, the Jeep Caravan Problem~\cite{phipps1947jeep} and the $n$-Vehicle Exploration Problem~\cite{zhang2021novel} consider the task of crossing a desert using a convoy of jeeps.
Mathematically, however, they are identical to the Airplane Refueling Problem. 
It should be noted that the Jeep Problem also refers to some variants where only a single vehicle is considered (see, e.g.,~\cite{fine1947jeep, gale1970jeep, hausrath1995gale, korf2022jeep, phipps1947jeep}). 
For clarity, we will stick to the context of Airplane Refueling for the remainder of this paper. \\

Formally, let $A = \{1,\dots,n\}$ be a set of airplanes, each specified by its \textit{tank volume}~$v_i \geq 0$ and \textit{fuel consumption rate}~$c_i > 0$. Of course, no fuel should be left unused, but apart from that, every aircraft should be eliminated as soon as possible to reduce the overall fuel consumption. 
Correspondingly, it is optimal to stop an airplane as soon as the continuing aircraft can carry all the remaining fuel on their own.
Following this procedure, the fleet has, under the dropout sequence $\sigma \colon \{1,\dots,n\} \to A$, a range of
\[ \frac{v_{\sigma(1)}}{c_{\sigma(1)} + \cdots + c_{\sigma(n)}} + \cdots + \frac{v_{\sigma(n-1)}}{c_{\sigma(n-1)} + c_{\sigma(n)}} + \frac{v_{\sigma(n)}}{c_{\sigma(n)}} = \sum_{i = 1}^{n} \frac{v_{\sigma(i)}}{\sum_{j=i}^{n} c_{\sigma(j)}} \enspace. \]
Define $\pi \colon A \to \{1,\dots,n\}$ to be the inverse of~$\sigma$ and observe that
\[ \sum_{i = 1}^{n} \frac{v_{\sigma(i)}}{\sum_{j=i}^{n} c_{\sigma(j)}} = \sum_{i \in A} \frac{v_i}{C_i}, \quad \text{where} \quad C_i \ = \ \quad \ \sum_{\mathclap{ j :\, \sigma^{-1}(j) \geq \sigma^{-1}(i)}} \quad\ c_j \ = \sum_{j :\, \pi(j) \geq \pi(i)} c_j. \]
Hence, maximizing the range of the fleet comes down to solving 
\begin{equation} \label{equation:ar_original}
\max_\pi \, \sum_{i \in A} \frac{v_i}{C_i}, \quad \text{where} \quad C_i = \sum_{j:\, \pi(j) \geq \pi(i)} c_j.
\end{equation}
Alternatively, with $\pi^\prime \colon A \to \{1,\dots,n\}$ defined as the reverse of $\pi$, that is, \linebreak $\pi^\prime(i) =  n + \nolinebreak 1 - \pi(i)$ for all $i \in A$, the Airplane Refueling Problem is
\begin{equation} \label{equation:ar_scheduling}
\max_{\pi^\prime} \, \sum_{i \in A} \frac{v_i}{C_i}, \quad \text{where} \quad C_i = \sum_{j :\, \pi^\prime(j) \leq \pi^\prime(i)} c_j.
\end{equation}

Comparing the last formulation to Block Stacking without counterbalancing, as given in~\eqref{equation:bsp_no_counterbalancing}, an interesting similarity can be observed. 
Indeed, by identifying $A = B$, $c_i = m_i$ and $v_i = w_i m_i$ for all $i \in A$, solving~\eqref{equation:ar_scheduling} is equivalent to solving~\eqref{equation:bsp_no_counterbalancing}.
Airplane Refueling, in turn, can be modeled as an instance of the Block-Stacking Problem by setting $B = A$, $m_i = c_i$, and $w_i = v_i / c_i$ for all $i \in B$.
Thus, any optimal stacking order for the Block-Stacking Problem corresponds to an optimal dropout sequence for the respective instance of the Airplane Refueling Problem, and vice versa.
\begin{theorem} \label{result:theorem:ar_bsp_equivalent}
The Block-Stacking Problem without counterbalancing is equivalent to the Airplane Refueling Problem.
\end{theorem}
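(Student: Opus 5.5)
The plan is to exhibit two polynomial-time, objective-preserving reductions: one from the Block-Stacking Problem without counterbalancing to the Airplane Refueling Problem, and one in the opposite direction. Each reduction will be the identity on the set of orderings, so that optimal solutions transfer directly. The starting point is the pair of closed-form objectives already derived: \eqref{equation:bsp_no_counterbalancing} for fully right-aligned stacks, and the scheduling form \eqref{equation:ar_scheduling} of Airplane Refueling.

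For the first direction, given blocks $B$ with half-widths $w_i \ge 0$ and masses $m_i > 0$, I define airplanes $A = B$ with $c_i = m_i > 0$ and $v_i = w_i m_i \ge 0$. These are valid airplane parameters. For any bijection $\pi$, the prefix sums agree, $C_i = M_i$, since both are taken over $\{j : \pi(j) \le \pi(i)\}$. Hence the summands agree termwise, $v_i/C_i = w_i m_i/M_i$, and the two objective values coincide for every $\pi$.

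For the converse, given airplanes with $v_i \ge 0$ and $c_i > 0$, I set $m_i = c_i$ and $w_i = v_i/c_i$. This is well defined because $c_i > 0$, and it is nonnegative, so it gives valid blocks. The same termwise identity then holds.

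It remains to link orderings to actual solutions on both sides. A stacking order $\pi$, top to bottom, corresponds to the dropout sequence $\sigma = (\pi')^{-1}$ with $\pi'(i) = n+1-\pi(i)$, as established in the derivation of \eqref{equation:ar_original} and \eqref{equation:ar_scheduling}. So the top block corresponds to the last airplane to drop out. Under this correspondence:
\begin{itemize}
\item each stacking order determines a unique fully right-aligned stack, by Theorem~\ref{result:theorem:stack_configuration} with $p$ the top block;
\item each dropout sequence determines the range given by the stated formula, which is achieved by the greedy dropout rule.
\end{itemize}
Since the objective functions are equal pointwise over all orderings, the maximizers coincide and the optimal values are equal. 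Both transformations take linear time with polynomially bounded encodings, since they involve only multiplications and divisions of the input numbers. Any exact or approximation algorithm, such as a $(1+\epsilon)$-guarantee, therefore carries over in either direction.

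I do not expect a real obstacle. The one point needing care is the orientation of the orders: the airplane formula \eqref{equation:ar_original} sums over suffixes, while block stacking sums over prefixes. I will handle this with the explicit reversal $\pi'$, and I will check that the correspondence between stacks and orderings is bijective, so that "optimal stack" and "optimal dropout sequence" match exactly rather than only in value.
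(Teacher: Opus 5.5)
Your proposal is correct and follows the paper's argument: the paper uses the same identifications $c_i = m_i$, $v_i = w_i m_i$ and, conversely, $m_i = c_i$, $w_i = v_i/c_i$, so that \eqref{equation:bsp_no_counterbalancing} and \eqref{equation:ar_scheduling} agree termwise for every ordering, with the reversal $\pi'$ handling the prefix/suffix orientation. Your added care about encoding size and the order correspondence only makes explicit what the paper leaves implicit. One small citation point: for a given order, the fully right-aligned stack is unique by definition together with the normalization of the overall center of gravity, so you do not need Theorem~\ref{result:theorem:stack_configuration} for that.
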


As a result, both problems have the same computational complexity. In addition, any algorithm, including approximation algorithms, known for one problem can be immediately applied to the other. 
In particular, recall that the Airplane Refueling Problem permits a polynomial-time approximation scheme~\cite{gamzu2019polynomial}. That is, for every $\epsilon>0$, there exists a $(1+\epsilon)$-approximation algorithm.
\begin{corollary} \label{result:corollary:1_plus_e_approximation_no_counterbalancing}
There exists a polynomial-time approximation scheme for the Block-Stacking Problem without counterbalancing.
\end{corollary}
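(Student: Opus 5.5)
The plan is to transfer the PTAS of Gamzu et al.~\cite{gamzu2019polynomial} for the Airplane Refueling Problem through the equivalence of Theorem~\ref{result:theorem:ar_bsp_equivalent}. What needs checking is that this equivalence preserves objective values exactly, not just optimal solutions, and that the translation costs only polynomial time.

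Given an instance $B$ of Block Stacking without counterbalancing, with half-widths $w_i$ and masses $m_i$, I would build the Airplane Refueling instance $A=B$ with $c_i=m_i>0$ and $v_i=w_i m_i\ge 0$. This is computed in polynomial time, and its encoding size is polynomial in that of $B$: each $v_i$ is a product of two input numbers.

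Next, fix any bijection $\pi$ and use it both as a stacking order (top to bottom) and as the order in formulation~\eqref{equation:ar_scheduling}. Then $C_i=\sum_{j:\pi(j)\le\pi(i)}c_j=M_i$ for every $i$. Hence the range $\sum_i v_i/C_i$ equals the overhang $\sum_i w_i m_i/M_i$ from \eqref{equation:bsp_no_counterbalancing}. So the objective functions coincide pointwise over all orders, and in particular the two optimal values are equal.

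Now fix $\epsilon>0$ and run the $(1+\epsilon)$-approximation of \cite{gamzu2019polynomial} on the constructed instance. If the algorithm is stated in terms of the dropout sequence $\sigma$, I would first convert it to $\pi'$ via $\pi=\sigma^{-1}$ and $\pi'(i)=n+1-\pi(i)$, which is linear time. The returned order $\pi'$ has range at least $\mathrm{OPT}/(1+\epsilon)$, and reading it as a stacking order gives a fully right-aligned stack with the same overhang. This yields a $(1+\epsilon)$-approximation for every fixed $\epsilon$, in time polynomial for each fixed $\epsilon$, which is exactly a PTAS.

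The only step needing real care is the bookkeeping of orientation: the original formulation~\eqref{equation:ar_original} sums the $c_j$ over later dropouts, whereas Block Stacking sums the masses over blocks above. Using the reversed formulation~\eqref{equation:ar_scheduling} resolves this. Beyond that, I would check that the approximation guarantee in \cite{gamzu2019polynomial} is multiplicative for a maximization objective with nonnegative values, so that it carries over verbatim.
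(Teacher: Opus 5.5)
Your proposal is correct and follows the paper's route: the paper likewise sets $c_i=m_i$ and $v_i=w_im_i$, observes that formulation~\eqref{equation:ar_scheduling} then coincides with \eqref{equation:bsp_no_counterbalancing}, and invokes the PTAS of \cite{gamzu2019polynomial}. You also check that the objective values agree for every order, not just at the optimum, and that the encoding stays polynomial; the paper leaves this implicit, but it is what justifies transferring approximation guarantees.
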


\section{Robust Appointment Scheduling.} \label{section:ras}
We now look at Robust Appointment Scheduling, a seemingly unrelated application-oriented optimization problem. Nonetheless, we will establish a connection to the Airplane Refueling Problem. Hence, Robust Appointment Scheduling and Airplane Refueling and Block Stacking form a conspicuous group of closely related problems. 

In the service industry, especially in high-cost sectors like healthcare, scheduling appointments as efficiently as possible can be a significant advantage. However, since the time required for the medical tasks to be performed can usually only be estimated, any schedule designed in advance will inevitably result in some of them being completed before or after their allotted time. Even though unavoidable, both situations are unfavorable. If a task is finished early, personnel, equipment, and facilities remain unused until the next task is due to start, resulting in underutilization costs, as this time could have been allocated to another task. On the other hand, a task that finishes late not only incurs costs due to its own tardiness, which we will refer to as the task’s overage costs, but also prevents the next task from starting at its scheduled time. Thus, a schedule that is too tight can cause considerable costs due to an accumulation of delays. The problem of Appointment Scheduling is finding a schedule that optimally balances the costs arising from possible underutilization and delays. Note that a crucial part of reducing costs is to adjust the order in which the tasks are processed. Because the uncertainty of a schedule increases with the number of previous tasks, those with high associated costs should be processed at the beginning. At the same time, placing tasks with less uncertainty at the beginning increases the reliability of the entire schedule, reducing overall costs. The complexity of the problem arises when the tasks at hand cannot be arranged so that both principles are fulfilled simultaneously.\\

In the following, we consider the model presented by Mittal et al.~\cite{mittal2014robust} based on robust optimization.
For a set of jobs (i.e., tasks) $J = \{1,\dots,n\}$, each characterized by an interval of possible processing times~$[\underline{p}_i, \overline{p}_i]$, a per-unit \textit{overage cost}~$o_i > 0$, and a common per-unit \textit{underutilization cost}~$u > 0$, the goal is to find a schedule consisting of a processing order $\pi \colon J \to \{1,\dots,n\}$ and a time period~$t_i$ allocated to each job~$i \in J$ that minimizes the cost of a worst-case scenario. Extensions and variants of this model have been explored in \cite{bauerhenne2024robust,schulz2019robust}.

As proved in~\cite{mittal2014robust}, for a fixed processing order~$\pi$, the optimal values for $t_1,\dots,t_n$ are given by a weighted average of the lower and upper bounds on the processing time of the respective jobs.
With $\Delta_i = \overline{p}_i - \underline{p}_i$, they are 
\[
t_i = \frac{u \underline{p}_i + O_i \overline{p}_i}{u + O_i} = \underline{p}_i + \Delta_i \frac{O_i}{u + O_i} , \quad \text{where} \quad
O_i = \sum_{j :\, \pi(j) \geq \pi(i)} o_j. 
\]
Moreover, the total cost of a worst-case scenario can also be expressed in closed form. As a result, finding the best processing order becomes the following optimization problem:
\begin{equation*}
\min_\pi \, \sum_{i \in J} \Delta_i \frac{u O_i}{u + O_i}, \quad \text{where} \quad O_i = \sum_{j :\, \pi(j) \geq \pi(i)} o_j .
\end{equation*}

\noindent Note that regardless of the processing order~$\pi$, it holds
\[ \sum_{i\in J} \Delta_i \frac{u O_i}{u + O_i} + u^2 \sum_{i\in J} \frac{\Delta_i}{u + O_i} = \sum_{i\in J} \Delta_i \frac{uO_i + u^2}{u + O_i} = u \sum_{i\in J} \Delta_i. \]
As the right-hand side is independent of $\pi$, minimizing the first sum on the left is equivalent to maximizing the second. Moreover, as $u > 0$, we have:
\begin{lemma} \label{result:lemma:ras_sict_equivalent}
A processing order~$\pi$ is optimal for Robust Appointment Scheduling if and only if it is optimal for 
\begin{equation} \label{equation:sict}
\max_\pi \, \sum_{i\in J} \frac{\Delta_i}{u + O_i}, \quad \text{where} \quad O_i = \sum_{j :\, \pi(j)  \geq \pi(i)} o_j.
\end{equation}
\end{lemma}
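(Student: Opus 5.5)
The plan is to use the identity displayed just before the statement, which holds for every processing order~$\pi$:
\[
\sum_{i\in J} \Delta_i \frac{u O_i}{u + O_i} \;=\; u \sum_{i\in J} \Delta_i \;-\; u^2 \sum_{i\in J} \frac{\Delta_i}{u + O_i}.
\]
Write $F(\pi)$ for the worst-case cost on the left, which Robust Appointment Scheduling minimizes, and $G(\pi)$ for the objective in~\eqref{equation:sict}. The identity then says $F(\pi) = K - u^2 G(\pi)$, where $K = u\sum_{i\in J}\Delta_i$ depends only on the instance and not on~$\pi$.

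First I would check the identity itself. It follows termwise: since $uO_i + u^2 = u(u+O_i)$, we get $\Delta_i \frac{uO_i}{u+O_i} + u^2\frac{\Delta_i}{u+O_i} = u\Delta_i$ for each~$i$. The quantities $O_i$ depend on~$\pi$, but the right-hand side $u\Delta_i$ does not, and summing over $i\in J$ gives the displayed equation. Note also that $u + O_i > 0$ because $u>0$ and $o_j>0$, so every fraction is well defined.

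Next I would compare two arbitrary orders $\pi$ and $\pi'$. The identity gives $F(\pi) - F(\pi') = -u^2\bigl(G(\pi) - G(\pi')\bigr)$. Since $u>0$, we have $u^2>0$, so $F(\pi)\le F(\pi')$ holds exactly when $G(\pi)\ge G(\pi')$. Hence $\pi$ satisfies $F(\pi)\le F(\pi')$ for all $\pi'$ if and only if it satisfies $G(\pi)\ge G(\pi')$ for all $\pi'$. Because there are only finitely many orders, optima exist on both sides, and the two sets of optimal orders coincide. This proves both directions of the "if and only if" at once.

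No step is a real obstacle, since the argument is a strictly decreasing affine change of objective. The one point to state explicitly is that the optimal $t_i$ from~\cite{mittal2014robust} are already substituted into the closed form. Robust Appointment Scheduling therefore reduces to choosing $\pi$ alone, so optimality of $\pi$ for that problem means exactly that $\pi$ minimizes $F$.
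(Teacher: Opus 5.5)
Your proposal is correct and follows the paper's own argument: the paper likewise uses the termwise identity $\Delta_i \frac{uO_i}{u+O_i} + u^2\frac{\Delta_i}{u+O_i} = u\Delta_i$. It then concludes, since $u\sum_{i\in J}\Delta_i$ does not depend on $\pi$ and $u>0$, that minimizing the worst-case cost is equivalent to maximizing~\eqref{equation:sict}.
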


Comparing~\eqref{equation:sict} with the Airplane Refueling Problem, as given in~\eqref{equation:ar_original},
both problems would be equivalent, if it were not for the shift~$u > 0$ in the denominator of~\eqref{equation:sict}. 
We still exploit this similarity to show that the Airplane Refueling Problem can be used for solving~\eqref{equation:sict}.

The idea of the transformation is to identify jobs with uncertainty range~$\Delta_i$ and overage costs~$o_i$ with airplanes that have tank volume $v_i = \Delta_i$ and fuel consumption rate $c_i = o_i$. It then remains to incorporate the shift~$u > 0$ into the aggregated fuel consumption rates~$C_i$ by adding an auxiliary airplane~$a^\ast$ with fuel consumption rate~$c_\ast = u$. 
To ensure $u$ is added to all $C_i$, the auxiliary airplane must be dropped last in any optimal solution, which can be guaranteed by setting its tank volume $v_\ast$ sufficiently large.
To demonstrate this more formally, we need the following necessary optimality condition for the Airplane Refueling Problem. 
Using the connection to the Block-Stacking Problem developed in Section~\ref{section:ar}, it is easy to see that this is essentially a reformulation of Lemma~\ref{result:lemma:condition_stacking_order}.

\begin{lemma} \label{result:lemma:condition_dropout_order}
Assume that the airplanes $A=\{1,\dots,n\}$ are indexed according to an optimal dropout order, i.e., $\pi(i) = i$ for all $i \in A$. Then, for the aggregated fuel consumption rates \mbox{$C_{n+1} := 0 < C_n < \cdots < C_2 < C_1$}, for all $i = 2,\dots,n$, it holds 
\[ f_i(C_{i+1}) \geq f_{i-1}(C_{i+1}), \quad \text{where} \quad f_i(x) = \frac{v_i}{c_i(x + c_i)} \enspace . \]
\end{lemma}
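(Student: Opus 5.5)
An exchange argument settles this directly: swap the adjacent airplanes $i-1$ and $i$ in the optimal dropout order and compare ranges. Alternatively, translate to Block Stacking via $m_i = c_i$, $w_i = v_i/c_i$ and invoke Lemma~\ref{result:lemma:condition_stacking_order}. I will carry out the swap directly in the formulation~\eqref{equation:ar_original} so the constants are transparent.

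With $\pi(i)=i$, airplane $i-1$ is dropped immediately before $i$. The aggregated rates are $C_{i-1} = C_{i+1} + c_i + c_{i-1}$ and $C_i = C_{i+1} + c_i$. Exchanging the positions of $i-1$ and $i$ leaves $C_j$ unchanged for every $j \notin \{i-1,i\}$, because those sums either contain both or neither of the two planes. Put $x = C_{i+1}$. After the swap, airplane $i$ has aggregated rate $x + c_i + c_{i-1}$ and airplane $i-1$ has $x + c_{i-1}$. Optimality therefore gives
\[ \frac{v_{i-1}}{x+c_{i-1}+c_i} + \frac{v_i}{x+c_i} \;\geq\; \frac{v_i}{x+c_{i-1}+c_i} + \frac{v_{i-1}}{x+c_{i-1}} . \]

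Next I rearrange this inequality as in the proof of Lemma~\ref{result:lemma:condition_stacking_order}. I use
\[ \frac{1}{x+c_i} - \frac{1}{x+c_{i-1}+c_i} = \frac{c_{i-1}}{(x+c_i)(x+c_{i-1}+c_i)}, \]
and the analogous identity for $v_{i-1}$. The inequality then becomes
\[ \frac{c_{i-1}\, v_i}{(x+c_i)(x+c_i+c_{i-1})} \;\geq\; \frac{c_i\, v_{i-1}}{(x+c_{i-1})(x+c_i+c_{i-1})} . \]
Multiplying both sides by the positive quantity $(x+c_i+c_{i-1})/(c_i c_{i-1})$ yields exactly $f_i(x) \geq f_{i-1}(x)$ with $x = C_{i+1}$, which is the claim.

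The only points needing care are the bookkeeping of which $C_j$ change under the swap and the index conventions. In particular, $C_{n+1}=0$ handles the case $i=n$, and the ordering $C_n<\dots<C_1$ follows from $c_j>0$. Strictly speaking, the chain $C_n<\dots<C_1$ is merely a consequence of positivity rather than something to prove. I expect no real obstacle beyond keeping these conventions straight.
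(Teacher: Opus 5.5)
Your proof is correct and takes essentially the paper's approach. The paper obtains this lemma by translating to Block Stacking via $m_i = c_i$, $w_i = v_i/c_i$, with block $i$ directly on top of block $i-1$ and $M = C_{i+1}$, and then invoking Lemma~\ref{result:lemma:condition_stacking_order}; that lemma's proof is the same adjacent-swap computation you carry out directly in the airplane formulation.
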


\begin{lemma} \label{result:lemma:airplane_dropped_last}
Let $v_i$, $c_i$ be the tank volumes and fuel consumption rates of the airplanes~$i \in A$ with $v_i \neq 0$ for at least one~$i \in A$. 
Given an additional airplane $a^\ast$ with fuel consumption rate $c_\ast > 0$, there exists a tank volume $v_\ast$ of polynomial size such that in any optimal dropout order the additional airplane is dropped last. 
In particular,
\[ v_\ast = \frac{c_\ast \max_j v_j}{c_\bullet^2}\ \bigg(c_\ast + \sum_{j \in A} c_j \bigg), \]
where $c_\bullet = \min\{c_\ast, c_j \mid j \in A\}$, fulfills these requirements.
\end{lemma}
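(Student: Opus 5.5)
The plan is to argue by contradiction using the necessary optimality condition of Lemma~\ref{result:lemma:condition_dropout_order}, applied to the extended fleet $A \cup \{a^\ast\}$. Suppose some optimal dropout order does not drop $a^\ast$ last. Then there is an airplane $j \in A$ dropped immediately after $a^\ast$. In the indexing of Lemma~\ref{result:lemma:condition_dropout_order}, $a^\ast$ is airplane $i-1$ and $j$ is airplane $i$, for some $i$. The lemma then yields
\[ \frac{v_j}{c_j(x + c_j)} \;\geq\; \frac{v_\ast}{c_\ast(x + c_\ast)}, \qquad x = C_{i+1}, \]
where $x$ is the total consumption rate of the airplanes dropped after $j$. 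I will show that the chosen $v_\ast$ forces the strict reverse inequality, which gives the contradiction.

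The first step is to bound the left-hand side from above. Since $x \geq 0$ and $c_j \geq c_\bullet$, we get $c_j(x+c_j) \geq c_j^2 \geq c_\bullet^2$. Hence $v_j/(c_j(x+c_j)) \leq \max_k v_k / c_\bullet^2$.

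The second step is to bound the right-hand side from below. Substituting the definition of $v_\ast$ gives
\[ \frac{v_\ast}{c_\ast(x+c_\ast)} = \frac{\max_k v_k}{c_\bullet^2}\cdot\frac{c_\ast + \sum_{k\in A} c_k}{x + c_\ast}. \]
The airplanes counted in $x$ form a subset of $A \setminus \{j\}$, so $x \leq \sum_{k \in A} c_k - c_j$. Because $c_j > 0$, this gives $x + c_\ast < c_\ast + \sum_{k\in A} c_k$, so the second factor is strictly greater than $1$. The hypothesis that $v_k \neq 0$ for some $k$ makes $\max_k v_k > 0$, and therefore the right-hand side is strictly greater than $\max_k v_k / c_\bullet^2$.

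Combining the two steps gives the strict reverse of the inequality required by Lemma~\ref{result:lemma:condition_dropout_order}. So no optimal order can have any airplane dropped after $a^\ast$, and $a^\ast$ is dropped last. Finally, $v_\ast$ is built from the input numbers by a constant number of sums, products, and one division. Its encoding length is therefore polynomial in the input size, which proves the ``polynomial size'' claim.

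The only delicate point is strictness. Lemma~\ref{result:lemma:condition_dropout_order} allows equality, so a non-strict reverse inequality would not yield a contradiction. Strictness comes from two facts: $c_j > 0$ makes $x + c_\ast$ strictly smaller than $c_\ast + \sum_{k\in A} c_k$, and $\max_k v_k > 0$ is exactly what the hypothesis $v_i \neq 0$ for some $i$ supplies. Beyond this, I expect no real obstacle.
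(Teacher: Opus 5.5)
Your proof is correct and follows essentially the same route as the paper. Both arguments apply the adjacent-swap condition of Lemma~\ref{result:lemma:condition_dropout_order} to the extended fleet and separate $f_\ast(x)$ from $f_j(x)$ through the intermediate quantity $\max_k v_k / c_\bullet^2$. The only difference is how strictness is obtained: you use the sharper range $x \le \sum_{k\in A} c_k - c_j$, so the factor $(c_\ast+\sum_{k} c_k)/(x+c_\ast)$ exceeds $1$ outright, whereas the paper works on the closed interval $[0,\sum_{k\in A} c_k]$ and rules out equality holding in both of its inequalities at once.
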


\begin{proof} 
By the definitions of $v_\ast$ and $c_\bullet$, for all $i \in A$ and $x \in [0, \sum_{j \in A} c_j]$, it holds
\begin{equation*}
f_\ast(x) = \frac{v_\ast}{c_\ast (c_\ast + x)} = \frac{\max_j v_j}{c_\bullet^2} \underbrace{\frac{c_\ast (c_\ast + \sum_{j \in A} c_j) }{c_\ast (c_\ast + x)}}_{\geq 1} \overset{(\text{I})}{\geq} \frac{v_i}{c_i^2} \overset{(\text{II})}{\geq} \frac{v_i}{c_i (c_i + x)} = f_i(x).
\end{equation*}
In fact, we can even see that $f_\ast(x) > f_i(x)$, as $(\text{I})$ can hold with equality only if $v_i = \max_j v_j$ and $x = \sum_j c_j$, while $(\text{II})$ holds with equality if and only if $x = 0$ or $v_i = 0 < \max_j v_j$. 
Since $C_{a^\ast} - c_\ast$ must lie within $[0, \sum_{j \in A} c_j]$, Lemma~\ref{result:lemma:condition_dropout_order} implies that in any optimal solution, the additional airplane must be dropped last. 
\end{proof}

\begin{lemma} \label{result:lemma:sict_ar_reduction}
A polynomial-time algorithm for the Airplane Refueling Problem implies a polynomial-time algorithm for~\eqref{equation:sict}.
\end{lemma}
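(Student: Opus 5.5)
The plan is to give a direct polynomial-time transformation from an instance of~\eqref{equation:sict} to an instance of the Airplane Refueling Problem, solve that instance, and read off an optimal processing order.

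Given jobs $J$ with parameters $\Delta_i$, $o_i$ and the common cost $u>0$, we define airplanes $A=J$ with $v_i=\Delta_i$ and $c_i=o_i$. We then add an auxiliary airplane $a^\ast$ with $c_\ast=u$ and tank volume $v_\ast$ chosen as in Lemma~\ref{result:lemma:airplane_dropped_last}.

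The degenerate case $\Delta_i=0$ for all $i$ must be handled separately. There the objective of~\eqref{equation:sict} is identically zero, so every order is optimal and we output an arbitrary one. Otherwise the hypothesis of Lemma~\ref{result:lemma:airplane_dropped_last} holds, and $v_\ast$ has polynomial encoding size: it is a product and quotient of input numbers and a sum of them. Hence the new instance is computable in polynomial time.

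Next we run the assumed polynomial-time algorithm for Airplane Refueling on $A\cup\{a^\ast\}$ and obtain an optimal order $\pi$. By Lemma~\ref{result:lemma:airplane_dropped_last}, $a^\ast$ is dropped last, i.e.\ $\pi(a^\ast)=n+1$. Consequently, for every $i\in J$ we have $C_i=\sum_{j\in J:\,\pi(j)\ge\pi(i)}o_j+u=O_i+u$, while $C_{a^\ast}=u$. The objective~\eqref{equation:ar_original} therefore equals
\[
\frac{v_\ast}{u}+\sum_{i\in J}\frac{\Delta_i}{u+O_i},
\]
where the first term is a constant independent of the ordering of $J$.

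We then check optimality. Every order $\pi'$ of $J$ for~\eqref{equation:sict} extends to a dropout order of $A\cup\{a^\ast\}$ by placing $a^\ast$ last. The objective value of this extension is that same constant plus the value of $\pi'$ in~\eqref{equation:sict}. Since $\pi$ is optimal among all dropout orders, in particular among those with $a^\ast$ last, its restriction to $J$ maximizes~\eqref{equation:sict}. By Lemma~\ref{result:lemma:ras_sict_equivalent}, this restriction is also optimal for Robust Appointment Scheduling.

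The only delicate step is guaranteeing that $a^\ast$ is dropped last. Lemma~\ref{result:lemma:airplane_dropped_last} already supplies this, so what remains is to verify polynomial size and to treat the all-zero case.
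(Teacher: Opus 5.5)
Your proposal is correct and follows the paper's proof essentially step for step. Both use the same reduction: $v_i=\Delta_i$, $c_i=o_i$, plus an auxiliary airplane with $c_\ast=u$ and $v_\ast$ taken from Lemma~\ref{result:lemma:airplane_dropped_last}; both set aside the all-zero case and split the objective into the constant $v_\ast/u$ plus the objective of~\eqref{equation:sict}. Your extension argument (any order of $J$ extends to a dropout order by dropping $a^\ast$ last) only makes explicit the optimality transfer that the paper states as an equivalence.
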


\begin{proof}
Consider an instance of~\eqref{equation:sict} given by $\Delta_i \geq 0$, $o_i > 0$ for all jobs $i \in J$, and the shift~$u > 0$. Without loss of generality, we can assume that $\Delta_i \neq 0$ for at least one $i \in J$, as otherwise solving~\eqref{equation:sict} is trivial. 
Let $A = J \cup \{a^\ast\}$ be a set of airplanes with the following specifications. For each $i \in J$, consider an aircraft with tank volume $v_i = \Delta_i$ and fuel consumption rate $c_i = o_i$. Furthermore, add the auxiliary airplane~$a^\ast$ with fuel consumption rate $c_\ast = u$ and tank volume $v_\ast$, as specified in Lemma~\ref{result:lemma:airplane_dropped_last}. Therefore, this aircraft has to be dropped last in any optimal solution to this instance of the Airplane Refueling Problem. 
Hence, any optimal dropout order $\pi^\ast \colon A \to \{1,\dots,n+1\}$ satisfies $\pi^\ast(a^\ast) = n+1$ and $\pi^\ast(i) = \pi(i)$ for all $i \in J$ for a permutation~$\pi \colon J \to \{1,\dots,n\}$. 
Let $C_i$ be the accumulated fuel consumption rates for the airplanes $i \in A$, and $O_i$ be the accumulated overage costs of the jobs $i\in J$ with respect to $\pi$. 
Then $O_i = C_i - c_\ast$ for all $i \in J$ and 
\[ \sum_{i \in A} \frac{v_i}{C_i} = \frac{v_\ast}{C_{a^\ast}} + \sum_{i\in J} \frac{v_i}{c_\ast + (C_i - c_\ast)} = \frac{v_\ast}{c_\ast} + \sum_{i\in J} \frac{\Delta_i}{u + O_i}. \]

Since $v_\ast / c_\ast$ is constant, the permutation~$\pi$ is optimal for the given instance of the Airplane Refueling Problem, \vspace{-3mm}%
\begin{equation*}
\max_\pi \, \sum_{i \in A} \frac{v_i}{C_i}, \vspace{-1mm}
\end{equation*} 
if and only if it is optimal for~\eqref{equation:sict}, i.e., for \ \vspace{-2mm}
\[ \max_\pi \, \sum_{i \in J} \frac{\Delta_i}{u + O_i}. \]
By Lemma~\ref{result:lemma:airplane_dropped_last}, $v_\ast$ is of polynomial size with respect to the input parameters. Hence, solving the given instance of the Airplane Refueling Problem via a polynomial-time algorithm constitutes a polynomial-time algorithm for~\eqref{equation:sict}.
\end{proof}

It is also possible to prove the reverse. 
In the proof of Lemma~\ref{result:lemma:sict_ar_reduction}, the shift~$u$ was modeled as an additional airplane.
Conversely, we can remove one airplane at a time and simulate its fuel consumption rate using the shift. 
Solving~\eqref{equation:sict} then provides the optimal permutation for the Airplane Refueling Problem, assuming that the removed airplane has to be dropped last.
The overall optimal dropout order can be identified by putting each airplane in this situation once and comparing the resulting objective values.
This proof idea can readily be formalized, but is omitted here due to its similarity to the proof of Lemma~\ref{result:lemma:sict_ar_reduction}. Nonetheless, taking into account Lemma~\ref{result:lemma:ras_sict_equivalent}, overall, we can state the following result.

\begin{theorem} \label{result:theorem:ras_ar_equivalent}
There is a polynomial-time algorithm for Robust Appointment Scheduling if and only if there is a polynomial-time algorithm for the Airplane Refueling Problem.
\end{theorem}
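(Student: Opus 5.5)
The plan is to prove the two directions separately and then combine them with Lemma~\ref{result:lemma:ras_sict_equivalent}. By that lemma, a processing order is optimal for Robust Appointment Scheduling exactly when it is optimal for~\eqref{equation:sict}. Once an optimal order $\pi$ is known, the optimal allotted times $t_i$ follow in closed form from the formula of Mittal et al. Hence it suffices to show that~\eqref{equation:sict} is solvable in polynomial time if and only if the Airplane Refueling Problem is. The direction ``Airplane Refueling $\Rightarrow$~\eqref{equation:sict}'' is exactly Lemma~\ref{result:lemma:sict_ar_reduction}, so only the converse remains.

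For the converse, take an instance $A=\{1,\dots,n\}$ of Airplane Refueling in the form~\eqref{equation:ar_original}. If all $v_i=0$, every order is optimal. Otherwise, for each $k\in A$ build an instance of~\eqref{equation:sict} with job set $J_k=A\setminus\{k\}$, $\Delta_i=v_i$, $o_i=c_i$ and shift $u=c_k>0$. Any permutation $\pi$ of $J_k$, extended by $\pi(k)=n$ so that airplane $k$ is dropped last, satisfies $C_i=c_k+O_i$ for $i\neq k$ and $C_k=c_k$. Its Airplane Refueling value is therefore
\[ \frac{v_k}{c_k}+\sum_{i\in J_k}\frac{\Delta_i}{u+O_i}, \]
the same computation as in the proof of Lemma~\ref{result:lemma:sict_ar_reduction} read backwards. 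Consequently an optimal order for~\eqref{equation:sict} on $J_k$ gives a best dropout order among those with $k$ dropped last.

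Every dropout order drops some airplane last, so the overall optimum is the maximum over $k$ of these $n$ restricted optima. The algorithm is then: solve $n$ instances of~\eqref{equation:sict}, evaluate the resulting $n$ dropout orders, and return the best. This takes polynomially many calls, and each instance has parameters of the same size as the input. Together with Lemma~\ref{result:lemma:sict_ar_reduction}, this gives the equivalence claimed in Theorem~\ref{result:theorem:ras_ar_equivalent}.

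There is no real obstacle here. The only points to check are that the shift $u=c_k$ is strictly positive, which holds since every $c_i>0$, and that degenerate instances with all $\Delta_i=0$ are handled trivially. Unlike Lemma~\ref{result:lemma:airplane_dropped_last}, no large auxiliary parameter is needed in this direction, so no size bound has to be argued.
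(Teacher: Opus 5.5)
Your proposal is correct and takes the paper's route. The forward direction is Lemma~\ref{result:lemma:sict_ar_reduction} combined with Lemma~\ref{result:lemma:ras_sict_equivalent}. Your converse (remove each airplane $k$ in turn, simulate its consumption rate by the shift $u=c_k$, solve~\eqref{equation:sict}, and keep the best of the $n$ restricted optima) is exactly the argument the paper sketches after Lemma~\ref{result:lemma:sict_ar_reduction} and leaves unformalized. Your write-up correctly supplies the omitted details, namely $C_i=c_k+O_i$ for $i\neq k$ and the constant term $v_k/c_k$.
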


The connection between the two problems allows using existing algorithms for the Airplane Refueling Problem to solve Robust Appointment Scheduling. 
Approximation algorithms, however, cannot be transferred due to the shift in the objective function. Still, as mentioned earlier, good approximation algorithms already exist for Robust Appointment Scheduling.
By Theorem~\ref{result:theorem:ar_bsp_equivalent}, Robust Appointment Scheduling is not only linked to Airplane Refueling but also to Block Stacking.  
In this way, they form a group of problems with a common computational complexity, albeit their true complexity remains unknown.
This demonstrates that advances in mathematical puzzles with no obvious application, such as the Airplane Refueling Problem and Block Stacking, can sometimes directly affect real-world problems such as Appointment Scheduling.

\section{A $\boldsymbol{(2+\epsilon)}$-Approximation for Block Stacking.} \label{section:approximation}

In Section~\ref{section:np_hardness}, we saw that Block Stacking with counterbalancing is NP-hard. A polynomial-time algorithm that finds an optimal solution is therefore elusive. Nevertheless, we can efficiently compute provably good solutions by resorting to approximation algorithms.
Throughout Sections~\ref{section:introduction} and~\ref{section:bsp}, we noted that  counterweights can facilitate greater overhangs. Often, however, the difference is small. 
Moreover, note that both with and without counterbalancing, we always want a wide and light block to protrude. 
As a result, in many cases, the same block protrudes in the optimal stack configurations of both scenarios, as, for example, in Figures~\ref{figure:treeby_blocks} and~\ref{figure:general_blocks}. 
Although this is not true for every set of blocks (see Figure~\ref{figure:remark}), we can prove that for any stack configuration, there is a fully right-aligned stack with the same protruding block that has at least half the original overhang.
In particular, this implies that finding the optimal fully right-aligned stack is a $2$-approximation of the general Block-Stacking Problem, as illustrated in Figure~\ref{figure:approximation}.
The example in Figure~\ref{figure:example_rearranging} demonstrates that this ratio is tight.

\begin{theorem} \label{result:theorem:two_approximation}
An optimal solution to Block Stacking without counterbalancing constitutes a \mbox{$2$-approximation} for the general Block-Stacking Problem.
\end{theorem}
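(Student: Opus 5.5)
Take an optimal stack configuration with counterbalancing. By Theorem~\ref{result:theorem:stack_configuration}, it has a protruding block $p$, a set $S$ of counterweight blocks above $p$ with total mass $C=\sum_{j\in S} m_j$, and right-aligned blocks below $p$. By \eqref{equation:overhang_fixed_configuration}, its overhang is
\[ \mathrm{OPT} = w_p\Big(2-\frac{m_p}{C+m_p}\Big) + \sum_{i:\,\pi(i)>\pi(p)} \frac{w_i m_i}{M_i}. \]
The plan is to build a fully right-aligned stack whose overhang is at least $\mathrm{OPT}/2$. The optimal fully right-aligned stack does at least this well, which proves the theorem.

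The candidate stack is this. Put $p$ on top. Directly below it, place the blocks below $p$ in the original stack, in their original order. At the very bottom, place the former counterweight blocks of $S$ in any order. We bound its overhang termwise.

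The protruding block now contributes $w_p(2-m_p/m_p)=w_p$. Its old contribution was $w_p(2-m_p/(C+m_p))\le 2w_p$, so we keep at least half of it.

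Each block $i$ originally below $p$ had weight $M_i$ on top of it, and $M_i$ included $C$. In the new stack the weight on top of $i$ is $M_i-C\le M_i$, and $M_i - C>0$ since $p$ is above $i$. Hence its contribution $w_im_i/(M_i-C)$ is at least its old contribution $w_im_i/M_i$.

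The former counterweights at the bottom contribute nonnegative amounts, since $w_i\ge 0$. Summing the three parts gives a new overhang of at least
\[ w_p + \sum_{i:\,\pi(i)>\pi(p)}\frac{w_im_i}{M_i} \;\ge\; \tfrac12\Big(2w_p + \sum_{i:\,\pi(i)>\pi(p)}\frac{w_im_i}{M_i}\Big)\;\ge\;\tfrac12\,\mathrm{OPT}. \]

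The main point needing care is that the comparison is termwise valid. This relies on the remark that a right-aligned block's contribution depends only on its own parameters and the mass above it, and that this contribution decreases in that mass.

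For tightness, I would point to the example of Figure~\ref{figure:example_rearranging}: there the ratio $(2k-\tfrac{k}{k+1})/(k+\tfrac{k}{k+1})$ tends to $2$.
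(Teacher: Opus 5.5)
Your proof is correct and is essentially the paper's argument: make $p$ the top block of a fully right-aligned stack, so that its contribution drops from $w_p(2 - m_p/M_p) \le 2w_p$ to $w_p$, while the blocks originally below $p$ lose nothing and the remaining blocks contribute nonnegatively. The only difference is where the former counterweights go: the paper keeps them directly beneath $p$ in their original order, so $M'_i = M_i$ exactly for the lower blocks, whereas you move them to the bottom, so those blocks see $M_i - C \le M_i$ and can only gain (strictly, $M_i$ is the mass of $i$ together with the blocks above it rather than the mass on top of $i$, but your algebra uses it consistently).
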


\begin{proof}

Let $p \in B$ be the protruding block, and $\pi \colon B \to \{1,\dots,\vert B \vert \}$ the stacking order of an optimal stack configuration (with counterbalancing). Now consider the fully right-aligned stack with block $p$ on top, but other than that the same stacking order. Let $M_i$, $M^\prime_i$ be the aggregated masses with respect to the old and new stacking order, respectively, and note that $M_i = M^\prime_i$ for all $i$ with $\pi(i) > \pi(p)$. According to~\eqref{equation:bsp_no_counterbalancing}, the overhang of the new stack is
\begin{align*}
\sum_{i \in B} \frac{w_i m_i}{M^\prime_i} &= w_p +  \sum_{i \in B\setminus\{p\} } \frac{w_i m_i}{M^\prime_i} \\
&\geq w_p + \sum_{i :\, \pi(i) > \pi(p)} \frac{w_i m_i}{M^\prime_i} \\
&\geq \frac{w_p}{2} \Big(2 - \frac{m_p}{M_p}\Big) + \frac{1}{2} \sum_{i :\, \pi(i) > \pi(p)} \frac{w_i m_i}{M^\prime_i} \\
&= \frac{1}{2} \bigg( w_p \Big( 2 - \frac{m_p}{M_p} \Big) + \sum_{i :\, \pi(i) > \pi(p)} \frac{w_i m_i}{M_i} \bigg),
\end{align*}
i.e., at least half the overhang of the original stack configuration (cf.~\eqref{equation:overhang_flexible_configuration}).
\end{proof}

\begin{figure}[ht] 
\centering
\begin{subfigure}[b]{.32 \textwidth}
\centering
\includegraphics{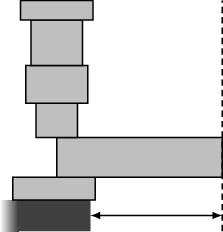}
\caption{}
\end{subfigure}
\begin{subfigure}[b]{.32 \textwidth}
\centering
\includegraphics{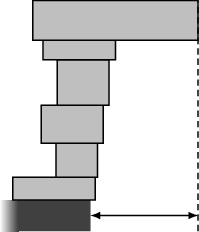}
\caption{}
\end{subfigure}
\begin{subfigure}[b]{.32 \textwidth}
\centering
\includegraphics{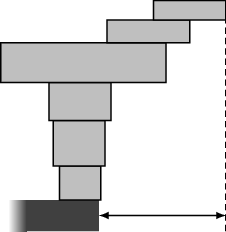}
\caption{}
\end{subfigure}
\caption{(a) is a stack configuration with maximum overhang. By Theorem~\ref{result:theorem:two_approximation}, (b) has at least half the overhang of (a). Therefore, the same holds for the best fully right-aligned stack (c).}
\label{figure:approximation}
\end{figure}

As a consequence, the PTAS for Block Stacking without counterbalancing (see Corollary~\ref{result:corollary:1_plus_e_approximation_no_counterbalancing}) automatically constitutes a $(2 + \epsilon)$-approximation algorithm for the general Block-Stacking Problem.

\begin{corollary} \label{result:corollary:2_plus_e_approximation}
For any $\epsilon > 0$, there is a $(2+\epsilon)$-approximation algorithm for the general Block-Stacking Problem.
\end{corollary}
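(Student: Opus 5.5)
The plan is to chain Theorem~\ref{result:theorem:two_approximation} with Corollary~\ref{result:corollary:1_plus_e_approximation_no_counterbalancing}. Given $\epsilon > 0$, choose $\epsilon' > 0$ with $2(1+\epsilon') \leq 2 + \epsilon$, for instance $\epsilon' = \epsilon/2$. Then run the PTAS for Block Stacking without counterbalancing with parameter $\epsilon'$. As a black box, this PTAS can be taken to be the Airplane Refueling PTAS of Gamzu et al.\ applied through the transformation $c_i = m_i$, $v_i = w_i m_i$ of Theorem~\ref{result:theorem:ar_bsp_equivalent}. It runs in polynomial time for fixed $\epsilon'$ and returns a stacking order $\pi$ whose fully right-aligned overhang $O(\pi)$ satisfies $O(\pi) \geq \mathrm{OPT}_{\mathrm{nc}}/(1+\epsilon')$, where $\mathrm{OPT}_{\mathrm{nc}}$ is the optimum of~\eqref{equation:bsp_no_counterbalancing}.

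Next, I would note that the returned fully right-aligned stack is itself a feasible, balanced configuration for the general problem. It is simply the case $p = \pi^{-1}(1)$ with no counterweights, whose value in~\eqref{equation:overhang_flexible_configuration} coincides with~\eqref{equation:bsp_no_counterbalancing}, since for the top block $w_p(2 - m_p/M_p) = w_p = w_p m_p/M_p$.

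By Theorem~\ref{result:theorem:two_approximation}, $\mathrm{OPT}_{\mathrm{nc}} \geq \mathrm{OPT}/2$, where $\mathrm{OPT}$ is the optimal overhang with counterbalancing. Hence $O(\pi) \geq \mathrm{OPT}/(2(1+\epsilon')) \geq \mathrm{OPT}/(2+\epsilon)$, which is the claimed guarantee.

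The only potentially delicate point is confirming that the approximation guarantee really transfers across the equivalence of Theorem~\ref{result:theorem:ar_bsp_equivalent}. This should be harmless: the transformation preserves objective values exactly, not just optimal solutions, and it is computable in polynomial time with polynomially sized numbers, since $v_i = w_i m_i$. Because the transfer is value-preserving, the multiplicative guarantee carries over verbatim.
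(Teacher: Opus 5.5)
Your proposal is correct and follows the paper's argument: the paper obtains the corollary by running the PTAS of Corollary~\ref{result:corollary:1_plus_e_approximation_no_counterbalancing} and invoking Theorem~\ref{result:theorem:two_approximation}. Your choice $\epsilon' = \epsilon/2$, your check that a fully right-aligned stack has the same value under~\eqref{equation:overhang_flexible_configuration} as under~\eqref{equation:bsp_no_counterbalancing}, and your remark that the transformation $v_i = w_i m_i$ preserves objective values exactly all fill in details the paper leaves implicit.
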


\section{Conclusion.} \label{section:conclusion}

We presented several novel aspects of the Block-Stacking Problem. First, we saw that constructing a stack with the largest possible overhang is NP-hard. In addition, we discovered that Block Stacking is more than just a nice puzzle in its own right, as stacks without counterweights are closely related to the Airplane Refueling Problem and the real-world challenge of Appointment Scheduling. It remains an intriguing open question whether this collection of problems can be solved efficiently. Nevertheless, this connection made it possible to obtain a polynomial-time approximation scheme for the Block-Stacking Problem without counterbalancing and a $(2+\epsilon)$-approximation algorithm for the general case. 




 


\begin{thebibliography}{99}

\bibitem{bauerhenne2024robust} Bauerhenne, C., Kolisch, R., Schulz, A. S. (2024). Robust appointment scheduling with waiting time guarantees. \textit{arXiv.} doi.org/10.48550/arXiv.2402.12561

\bibitem{fine1947jeep} Fine, N. J. (1947). The jeep problem. \textit{Amer. Math. Monthly.} 54(1): 24-31. doi.org/10.2307/2304923

\bibitem{franklin1960range} Franklin, J. N. (1960). The range of a fleet of aircraft. \textit{J. Soc. Ind. Appl. Math.} 8(3): 541-548. doi.org/10.1137/0108039

\bibitem{gale1970jeep} Gale, D. (1970). The jeep once more, or jeeper by the dozen. \textit{Amer. Math. Monthly.} 77(5): 493-501. doi.org/10.1080/00029890.1970.11992525

\bibitem{garey1979computers} Garey, M. R., Johnson, D. S. (1979). \textit{Computers and Intractability: A Guide to the Theory of NP-Completeness.} San Francisco: W. H. Freeman.

\bibitem{gamzu2019polynomial} Gamzu, I., Segev, D. (2019). A polynomial-time approximation scheme for the airplane refueling problem. \textit{J. Scheduling.} 22(1): 119-135. doi.org/10.1007/s10951-018-0569-x

\bibitem{hall2005fun} Hall, J. F. (2005). Fun with stacking blocks. \textit{Amer. J. Physics.} 73(12): 1107-1116. doi.org/10.1119/1.2074007

\bibitem{hausrath1995gale} Hausrath, A., Jackson, B., Mitchem, J., Schmeichel, E. (1995). Gale's round-trip jeep problem. \textit{Amer. Math. Monthly.} 102(4): 299-309. doi.org/10.1080/00029890.1995.11990575

\bibitem{hohn2015performance} H{\"o}hn, W., Jacobs, T. (2015). On the performance of Smith's rule in single-machine scheduling with nonlinear cost. \textit{ACM Trans. Algorithms.} 11(4): 1-30. doi.org/10.1145/2629652

\bibitem{horton1997leaning} Horton, G. K., Holton, B. E., Freidkin, E. (1997). The leaning tower of tiles - revisited. \textit{Phys. Teach.} 35(4): 214-219. doi.org/10.1119/1.2344654

\bibitem{johnson1955leaning} Johnson, P. B. (1955). Leaning tower of lire. \textit{Amer. J. Phys.} 23: 240. doi.org/10.1119/1.1933957
		 
\bibitem{kazachkov2017stack} Kazachkov, A., Kire{\v s}, M. (2017). A stack of cards rebuilt with calculus. \textit{Phys. Educ.} 52(4): 045019. doi.org/10.1088/1361-6552/aa6a4e

\bibitem{korf2022jeep} Korf, R. E. (2022) A Jeep Crossing a Desert of Unknown Width, \textit{Amer. Math. Monthly.} 129(5): 435-444. doi.org/10.1080/00029890.2022.2051404

\bibitem{lengvarszky2023maximum} Lengv{\'a}rszky, Z. and Shepherd, D. (2023). Maximum overhang of sticky stacks. \textit{Math. Mag.} 96(5): 540-550. doi.org/10.1080/0025570X.2023.2266324

\bibitem{li2019fast} Li, J., Hu, X., Luo, J., Cui, J. (2019). A fast exact algorithm for airplane refueling problem. \textit{International Conference on Combinatorial Optimization and Applications.} Springer. pp. 316-327. doi.org/10.1007/978-3-030-36412-0\_25

\bibitem{megow2018dual} Megow, N., Verschae, J. (2018). Dual techniques for scheduling on a machine with varying speed. \textit{SIAM J. Discrete Math.} 32(3): 1541-1571. doi.org/10.1137/16M105589X

\bibitem{mittal2014robust} Mittal, S., Schulz, A. S., Stiller, S. (2014). Robust appointment scheduling. \textit{Approximation, Randomization, and Combinatorial Optimization. Algorithms and Techniques (APPROX/RANDOM 2014).} Schloss Dagstuhl, Leibniz-Zentrum fuer Informatik. doi.org/10.4230/LIPIcs.APPROX-RANDOM.2014.356

\bibitem{paterson2009maximum} Paterson, M., Peres, Y., Thorup, M., Winkler, P., Zwick, U. (2009). Maximum overhang. \textit{Amer. Math. Monthly.} 116(9): 763-787. doi.org/10.4169/000298909X474855

\bibitem{paterson2009overhang} Paterson, M., Zwick, U. (2009). Overhang. \textit{Amer. Math. Monthly.} 116(1): 19-44. doi.org/10.1080/00029890.2009.11920907

\bibitem{phipps1947jeep} Phipps, C. G. (1947). The jeep problem: a more general solution. \textit{Amer. Math. Monthly.} 54(8): 458-462. doi.org/10.1080/00029890.1947.11991865

\bibitem{polster2012case} Polster, B., Ross, M., Treeby, D. (2012). A case of continuous hangover. \textit{Amer. Math. Monthly.} 119(2): 122-139. doi.org/10.4169/amer.math.monthly.119.02.122

\bibitem{schulz2019robust} Schulz, A. S., Udwani, R. (2019). Robust appointment scheduling with heterogeneous costs. \textit{Approximation, Randomization, and Combinatorial Optimization. Algorithms and Techniques (APPROX/RANDOM 2019).} Schloss Dagstuhl, Leibniz-Zentrum fuer Informatik. doi.org/10.4230/LIPIcs.APPROX-RANDOM.2019.25

\bibitem{stiller2010increasing} Stiller, S., Wiese, A. (2010). Increasing speed scheduling and flow scheduling. \textit{International Symposium on Algorithms and Computation.} Springer. pp. 279-290 doi.org/10.1007/978-3-642-17514-5\_24

\bibitem{treeby2018further} Treeby, D. (2018). Further thoughts on a paradoxical tower. \textit{Amer. Math. Monthly.} 125(1): 44-60. doi.org/10.1080/00029890.2018.1390375

\bibitem{vasquez2015airplane} V\'asquez, O. C. (2015). For the airplane refueling problem local precedence implies global precedence. \textit{Optim. Lett.} 9(4): 663-675. doi.org/10.1007/s11590-014-0758-2

\bibitem{woeginger2010airplane} Woeginger, G. J. (2010). The airplane refueling problem. \textit{Seminar 10071 ``Scheduling''.} Schloss Dagstuhl - Leibniz-Zentrum fuer Informatik. p. 24. doi.org/10.4230/DagSemProc.10071.3

\bibitem{zhang2021novel} Zhang, G. J., Cui, J. C. (2021). A novel MILP model for $N$-vehicle exploration problem. \textit{J. Oper. Res. Soc. China.} 9(2): 359-373. doi.org/10.1007/s40305-019-00289-2

\end{thebibliography}
\end{document}